\definecolor{skin}{HTML}{FFECC9}
\definecolor{pumpkin}{HTML}{FEDFA9}
\definecolor{piggy}{HTML}{FFB99D}
\definecolor{fiolet}{HTML}{CD8F9C}
\definecolor{granat}{HTML}{677081}
\definecolor{ciemnyblekit}{HTML}{91A1B8}
\definecolor{oliwkowy}{HTML}{627037}
\definecolor{ciemnazielen}{HTML}{394D2E}
\definecolor{ciemnyfiolet}{HTML}{424444}
\definecolor{mocnyfiolet}{HTML}{717299}
\definecolor{jasnyfiolet}{HTML}{B0ABCC}
\definecolor{bladyfiolet}{HTML}{C9C7DB}
\definecolor{quec}{rgb}{0.8,.000,.278}      
\definecolor{ansc}{rgb}{.200,0.7,.400}     
\definecolor{idec}{rgb}{.400,.000,.700}     
\definecolor{todo}{rgb}{.800,.100,.300}     
\newtheorem*{pro}{Open Problem}
\def\pr{\begin{pro}}
\def\kpr{\end{pro}}
\DeclareMathOperator{\MinCuts}{MinMonoCuts}
\def\mincuts#1#2{\boldsymbol {\MinCuts} \left( #1, #2 \right)}
\DeclareMathOperator{\minmono}{MonoSize}
\DeclareMathOperator{\maxcolor}{ColorSize}
\DeclareMathOperator{\MaxCuts}{MaxColorCuts}
\def\maxcuts#1#2{\boldsymbol {\MaxCuts} \left( #1, #2 \right)}
\def\almostBinary#1{\TtT^{bin}_{#1}}
\def\trainTrack#1{\TtT^{train}_{#1}}
\title{The Hackbusch conjecture on tensor formats --- part two}
 \author[W.~Buczy\'nska]{Weronika Buczy\'nska}
 \thanks{W.~Buczy\'nska is supported by  Polish National Science Center (NCN), project 2013/11/D/ST1/02580.
}
 \address{Weronika Buczy\'nska\\
Departement of  Mathematics, Mechanics and Computere Science\\
 ul. Banacha 2\\
 02-097 Warszawa, Poland}
 \email{wkrych@mimuw.edu.pl}
\date{1 February 2018}
\begin{document}
\maketitle

\begin{abstract}
We prove a conjecture of W.~Hackbusch in a bigger generality than in our previous article. Here we consider  Tensor Train (TT) model with an arbitrary number of leaves and a corresponding "almost binary tree" for Hierarchical Tucker (HT) model, i.e. the deepest tree with the same number of leaves. Our main result is an algorithm that computes the flattening rank of a generic tensor in a Tensor Network State ($\TNS$) model on a given tree with respect to any flattening coming from combinatorics of the space. The methods also imply that the tensor rank (which is also called CP-rank) of most  tensors in a $\TNS$ model grows exponentially with the growth of the number of leaves for any shape of the tree.
\end{abstract}

\section{Introduction.}
In this article we  study  the \emph{variety of tensor network states} 
$\TNS(\TtT, f) \subset V_1 \otimes \dotsb \otimes V_n$ for a given tree, a function on its edges, vector spaces $V_i$ assigned to the leaves following~\cite{hackbusch_book}.

Our goal is to build some tools that help to compare  tensor network spaces with each other.
We compute the maximum possible flattening rank of a  tensor in a given $\TNS$ model with respect to a fixed subset of leaves which encodes the flattening.  
The central  case of this paper is a version Hackbusch conjecture:  two models are both defined by a constant function on trees with the same number of leaves one is Train Track model and the other is Hierarchical model on a \emph{almost perfect} binary tree. 

We also obtain a bound and  an algorithm calculating the maximum flattening rank of a tensor with respect to a subset of leaves for a non-constant function defining the $\TNS$ space. 

The main result of~\cite{carlini_kleppe_tensor_rank_from_flattenings} is an exact  bound for a  flattening rank of a tensor. It is obtained from flattenings that are divisions of the initial flattening.
Our upper bound is a special case of this result.

In numerical tensor analysis it is an important problem to know how fast can the dimension of flattenings of tensors grow with the size of the tree. The main result of~\cite{cohen_sharrir_shashua_exp_bound} is a bound  exponential in the number of leaves for $HT$ model, that is on almost perfect binary tree. In the paper~\cite{khrulokov_novikov_oseledets_expressive_power_of_recurrent_neural_networks} Theorem 1 says that such a bound holds for Train Track model. With our technique we obtain exponential lower bound for tensors in a $\TNS$ defined by any binary tree and constatnt function.

\section{Notation and the vertex definition.}
For a set $S$, its size is $|S|$.
A brief notation for the  set $\{1,\ldots, j\}$ is $ [j] $.

Given a tree $\TtT$, we have the set of vertices $\ccV(\TtT)$, the set of edges $\ccE(\TtT)$, the set of leaves $\ccL(\TtT)$. When the tree is clear, we omit it.
Let $e\in\ccE$ be an edge of the tree. Removing $e$ from the tree yields two trees -- we say one is to the left of $e$, the other to the right.
By $\overset{\leftarrow}{e}$ we denote the set of leaves of the tree to the left of the edge $e$. If $v \in \ccV$ is a vertex, then $\da v$ is the set of leaves of which $v$ is an ancestor.
\begin{defin}\label{standard_definition_of_TNS}
Given a binary tree $\TtT$ with $n$ leaves, we pick a vector space $V_i$ for each leaf. 
We also fix an integer-valued function  $f : \ccV(\TtT) \ra \NN$ on the vertices of the tree.
We define the \emph{variety of tensor network states} $\TNS(\TtT, f) \subset V_1 \otimes \dotsb \otimes V_n$ 
  in the following way:
  $t \in \TNS(\TtT, f)$ if and only if there exist linear subspaces $U_v$ of dimension at most $f(v)$,
  such that:
  \begin{itemize}
     \item $U_i \subset V_i$, if $v=i$ is one of the leaves,
     \item $U_v \subset U_{v_1}\otimes U_{v_2}$ whenever $v$ is not a leaf and $v_1$ and $v_2$ are its children,
     \item $t \in U_v$, if $v$ is the root of the tree.
  \end{itemize}
  
\end{defin}

\section{The edge definition of $\TNS$.}

The vertex definition  of the Tensor Network Space encodes the space by a tree, a natural-valued function on vertices, and vector spaces on leaves. 
We rewrite this definition:  the function  has the same values as before  assigned to the edges instead of vertices. Also, we  remove the root and the  value assigned to it.
In our previous article Proposition 2.6 of~\cite{nisiabu_jabu_mateusz_hackbusch}, we  explain that in this way we define the same variety:
\begin{prop}\label{prop_equivalent_def_of_TNS}
   Let $f$, $\TtT$ and the order of leaves be as in Definition~\ref{standard_definition_of_TNS}.
   The variety $\TNS(\TtT, f)$ is the locus of tensors $t\in V_1\otimes\dots\otimes V_n$, such that for any vertex $v\in\VvV$ we have:
 \[
\dim \left(\left(\bigotimes_{l\in\{ \da v\}}V_l\right)^*\hook t\right)\leq f(v).
   \]
\end{prop}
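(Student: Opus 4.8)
The plan is to prove the two implications of the equivalence separately, in both directions relying only on the \emph{minimal subspace} property of tensors: for $s\in A\otimes B$ and subspaces $A'\subseteq A$, $B'\subseteq B$, one has $s\in A'\otimes B'$ precisely when the subspace $B^*\hook s=\{\beta\hook s:\beta\in B^*\}$ of $A$ is contained in $A'$ and, symmetrically, $A^*\hook s\subseteq B'$. I will also use that for every vertex $v$ the flattenings $\bigl(\bigotimes_{l\in\da v}V_l\bigr)^*\hook t$ and $\bigl(\bigotimes_{l\notin\da v}V_l\bigr)^*\hook t$ have the same dimension, being the (common) rank of $t$ regarded as a matrix across the partition of $\ccL(\TtT)$ into $\da v$ and its complement; so in the displayed condition one may contract whichever side is convenient.

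For the implication $t\in\TNS(\TtT,f)\Rightarrow$ the bounds, I would fix subspaces $U_w$ witnessing membership as in Definition~\ref{standard_definition_of_TNS} (so $U_w\subseteq\bigotimes_{l\in\da w}V_l$ by induction down the tree) and fix a vertex $v$. Descending the path $r=w_0,w_1,\dots,w_k=v$ from the root to $v$ and writing $w_i'$ for the sibling of $w_i$, iterated use of $U_{w_i}\subseteq U_{w_{i+1}}\otimes U_{w_{i+1}'}$ starting from $t\in U_r$ gives
\[
t\in U_v\otimes U_{w_k'}\otimes\dots\otimes U_{w_1'}.
\]
Since $\da v,\da{w_k'},\dots,\da{w_1'}$ partition $\ccL(\TtT)$, this exhibits $t$ in $U_v\otimes W$ with $W\subseteq\bigotimes_{l\notin\da v}V_l$, hence $\bigl(\bigotimes_{l\notin\da v}V_l\bigr)^*\hook t\subseteq U_v$ has dimension at most $\dim U_v\le f(v)$, which by the rank symmetry is exactly the stated inequality.

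For the converse, given $t$ satisfying all the bounds I would construct the witnesses directly, setting
\[
U_v:=\Bigl(\bigotimes_{l\notin\da v}V_l\Bigr)^*\hook t\subseteq\bigotimes_{l\in\da v}V_l ,
\]
so that $\dim U_v\le f(v)$ by hypothesis; the leaf conditions $U_i\subseteq V_i$ and the root condition $t\in U_r=\langle t\rangle$ are immediate. The substantive step is the nesting $U_v\subseteq U_{v_1}\otimes U_{v_2}$ for $v$ with children $v_1,v_2$. Writing $A=\bigotimes_{l\in\da{v_1}}V_l$, $B=\bigotimes_{l\in\da{v_2}}V_l$, $C=\bigotimes_{l\notin\da v}V_l$, and taking $\gamma\in C^*$, the tensor $s=\gamma\hook t\in A\otimes B$ satisfies $\beta\hook s=(\beta\otimes\gamma)\hook t$ for every $\beta\in B^*$, so $B^*\hook s\subseteq(B\otimes C)^*\hook t=U_{v_1}$ and symmetrically $A^*\hook s\subseteq(A\otimes C)^*\hook t=U_{v_2}$; the minimal subspace property then yields $s\in U_{v_1}\otimes U_{v_2}$, and as such $s$ span $U_v$ we conclude $U_v\subseteq U_{v_1}\otimes U_{v_2}$.

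I expect the main obstacle to be purely combinatorial bookkeeping: verifying that the leaf-sets $\da{w_i'}$ collected along the root-to-$v$ path really do form a partition of $\ccL(\TtT)\setminus\da v$, and stating the supports of $s=\gamma\hook t$ sharply enough (as the actual images $B^*\hook s$ and $A^*\hook s$, not merely as something contained in a larger subspace) so that the minimal subspace property applies without loss. No deeper input is needed, which is consistent with this being essentially Proposition 2.6 of~\cite{nisiabu_jabu_mateusz_hackbusch}.
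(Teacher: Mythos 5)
Your proof is correct and complete: the paper itself gives no argument here (it defers to Proposition~2.6 of \cite{nisiabu_jabu_mateusz_hackbusch}), and your two directions --- unfolding $t\in U_{w_0}\subseteq U_v\otimes U_{w_k'}\otimes\dots\otimes U_{w_1'}$ along the root-to-$v$ path, and conversely taking $U_v$ to be the minimal subspace $\bigl(\bigotimes_{l\notin\da v}V_l\bigr)^*\hook t$ and checking nestedness via $\beta\hook(\gamma\hook t)=(\beta\otimes\gamma)\hook t$ --- are exactly the standard minimal-subspace argument that the cited reference carries out. No gaps; the two bookkeeping points you flag (the partition of $\ccL\setminus\da v$ by the siblings' leaf sets, and the identification $(B\otimes C)^*\hook t=U_{v_1}$) both check out immediately from $\da{w_{i-1}}=\da{w_i}\sqcup\da{w_i'}$.
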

\subsection{The edge definition versus vertex definition of $\TNS$.}
We rewrite the  definition of the $\TNS(\TtT, f)$. 
 Given a tree $\TtT$ and a function $f$ on vertices, 
we  construct  function $g$ from  edges to the natural numbers. 
Let $v_s(e)$ and $v_f(e)$ be the two ends of the edge $e$ so that $v_f(e)$ is the father of $v_s(e)$.
We set $g(e):=f(v_s(e))$.

Moreover, we remove the root $v_r$ of the tree. The two edges $e_1$ and $e_2$ adjacent to it become one edge $e_r$.
The value $g(e_r):= \min(g(e_1),g(e_2)) = \min (f(v_s(e_1)),f(v_s(e_2))$ equal to the minimum of the values assigned to the two old edges or equivalently two sons of the root.

 This does not change the $\TNS(\TtT, f)$, since the value at the root was irrelevant anyway --- see Proposition~\ref{prop_equivalent_def_of_TNS}.
 
\begin{defin}\label{definintion-edge-def-of-TNS}
Let $\TtT$ be a tree and $f : \ccE(\TtT) \ra \NN$ a natural valued function on the set of edges of the tree $\TtT$. Then $\TNS(\TtT, f)$ is the set of tensors 
$t \in V_1 \otimes \dots \otimes V_n$ such that
 \[
   \dim \left(\left(\bigotimes_{l\in\{\overset{\leftarrow}{e}\}}V_l\right)^*\hook t\right)\leq f(v).
   \]
\end{defin}

\begin{lem}
  Given a tensor $t\in V_1 \otimes \ldots \otimes V_n$ and
  a subset $A\subset \LlL$ of the leaves it is not important if we
  hook $t$ in $\ccA$ or its complement
  \[
    \dim \left( \otimes_{l\in A}   V_l \right)^*\hook t =
   \dim \left( \otimes_{l\notin A}   V_l \right)^*\hook t 
    \]
\end{lem}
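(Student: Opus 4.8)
The plan is to reduce this identity to the elementary fact that a linear map between finite-dimensional vector spaces has the same rank as its transpose. First I would collect the factors indexed by $A$ and by its complement separately, setting $W := \bigotimes_{l \in A} V_l$ and $W' := \bigotimes_{l \notin A} V_l$; then $V_1 \otimes \dots \otimes V_n$ is canonically isomorphic to $W \otimes W'$ (reordering the tensor factors changes none of the spaces or maps that occur), and $t$ corresponds to an element, still denoted $t$, of $W \otimes W'$. In this language the left-hand side of the claimed equality is the dimension of the image of the contraction map $\phi \colon W^* \to W'$, $\xi \mapsto \xi \hook t$, while the right-hand side is the dimension of the image of $\psi \colon (W')^* \to W$, $\eta \mapsto \eta \hook t$; so the statement is exactly $\operatorname{rank} \phi = \operatorname{rank} \psi$.

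Next I would observe that $\psi$ is the transpose of $\phi$ under the canonical identification $W \cong W^{**}$. Writing $t = \sum_k w_k \otimes w'_k$ with $w_k \in W$ and $w'_k \in W'$, one has $\phi(\xi) = \sum_k \xi(w_k)\, w'_k$ and $\psi(\eta) = \sum_k \eta(w'_k)\, w_k$, hence for all $\xi \in W^*$ and $\eta \in (W')^*$ the pairing satisfies $\langle \eta, \phi(\xi) \rangle = \sum_k \xi(w_k)\,\eta(w'_k) = \langle \xi, \psi(\eta) \rangle$. This is precisely the relation $\langle \eta, \phi\,\xi \rangle = \langle \phi^{*}\eta,\,\xi\rangle$ characterizing the transpose, i.e.\ $\psi = \phi^{*}$ after identifying $W$ with $W^{**}$. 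Then I would invoke the standard linear-algebra fact that $\operatorname{rank} \phi = \operatorname{rank} \phi^{*}$ (row rank equals column rank), which gives $\operatorname{rank}\phi = \operatorname{rank}\psi$ and hence the lemma.

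If one wishes to allow the $V_i$ to be infinite-dimensional, I would insert one preliminary step: since $t$ is a finite sum of pure tensors it lies in $W_0 \otimes W'_0$ for some finite-dimensional subspaces $W_0 \subseteq W$ and $W'_0 \subseteq W'$, the images of $\phi$ and $\psi$ lie in $W'_0$ and $W_0$ respectively, and both are computed by restricting $\phi,\psi$ there; this reduces everything to the finite-dimensional case. I do not anticipate any real obstacle here: the only things to watch are the bookkeeping of the canonical identifications — the reordering of tensor factors and $W \cong W^{**}$ — and, if needed, this finiteness reduction. The content of the lemma is simply ``rank equals rank of the transpose''.
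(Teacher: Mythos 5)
Your proof is correct and is exactly the argument the paper intends: the paper's entire proof is the one-line remark that the rank of a matrix equals the rank of its transpose, and you have simply spelled out the identification of the two contraction maps as transposes of one another. No gaps; the finiteness remark is a harmless extra precaution.
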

\begin{proof}
  This follows from the properties of the rank -- the rank of a matrix and its transpose is the same.
\end{proof}
\begin{rmk}
  In the edge definition of the $\TNS$, as we said before,  we do not have the root of the tree. But we can  place the root on any edge we like and go back to the vertex definition.
\end{rmk}

\begin{defin}
 Given a tree $\TtT$ and a subset $\ccA\subset\LlL$ of the leaves of the tree, we define a  \textbf{minimal monochromatic cut} as a minimal set of edges, such that each tree in the forest obtained by removing those edges from the initial tree has all leaves either in the set $\ccA$ or in its complement. We denote by $\mincuts \TtT \ccA$ the set of all minimal monochromatic cuts. By
\[
\minmono|\TtT,\ccA| 
\]
we denote the size of a minimal monochromatic cut.
\end{defin}
\begin{defin}
  Given a tree $\TtT$ and a subset of $\ccA\subset\LlL$ of the leaves of the tree, we define \textbf{maximal colour cut} as a maximal set of edges, such that neither of the trees in the forest obtained by removing those edges from the initial tree has all the leaves in $\ccA$ or in its complement. We denote by $\maxcuts \TtT \ccA$ the set of all maximal colour cuts.
\end{defin}
\begin{rmk}
Neither minimal monochromatic cut or maximal colour cut are unique for a given tree and a subset of leaves. 
\end{rmk}
\begin{ex}
The following tree with $12$ leaves and the dark/white division of leaves has a unique monochromatic cut:
\begin{equation*}
\begin{tikzpicture} 
[baseline=.4cm,scale=0.5]
\foreach \i in {5.5,2.5} {
  \draw[  ciemnyblekit, thick] (1,\i) -- (0,\i-0.5);
  \draw[ densely dotted,ciemnyblekit, thick] (1,\i) -- (0,\i+0.5);
  \draw[ ciemnyblekit, thick] (6,\i) -- (7,\i-0.5);
  \draw[ densely dotted,ciemnyblekit, thick] (6,\i) -- (7,\i+0.5);

}
  \draw[ ciemnyblekit, thick] (1,5.5) -- (2,5);
  \draw[ ciemnyblekit, thick] (0,4) -- (2,5);
  \draw[ ciemnyblekit, thick] (1,2.5) -- (2,2);
  \draw[ ciemnyblekit, thick] (0,1) -- (2,2);

  \draw[ ciemnyblekit, thick] (6,5.5) -- (5,5);
  \draw[ ciemnyblekit, thick] (7,4) -- (5,5);
  \draw[ ciemnyblekit, thick] (6,2.5) -- (5,2);
  \draw[ ciemnyblekit, thick] (7,1) -- (5,2);

  \draw[ ciemnyblekit, thick] (2,5) -- (3,3.5);
  \draw[ ciemnyblekit, thick] (2,2) -- (3,3.5);
  \draw[ ciemnyblekit, thick] (5,5) -- (4,3.5);
  \draw[ ciemnyblekit, thick] (5,2) -- (4,3.5) ;

  \draw[  densely dotted,ciemnyblekit, thick] (3,3.5) -- (4,3.5) ;

\filldraw [ciemnyblekit]   (0,6) circle (3pt);
\filldraw[color=ciemnyblekit!80, fill=white!100, thick](0,5) circle (3pt);
\filldraw[color=ciemnyblekit!80, fill=white!100, thick](0,4) circle (3pt);
\filldraw [ciemnyblekit]   (0,3) circle (3pt);
\filldraw[color=ciemnyblekit!80, fill=white!100, thick](0,2) circle (3pt);
\filldraw[color=ciemnyblekit!80, fill=white!100, thick](0,1) circle (3pt);

\filldraw[color=ciemnyblekit!80, fill=white!100, thick](7,6) circle (3pt);
\filldraw[ciemnyblekit](7,5) circle (3pt);
\filldraw[ciemnyblekit](7,4) circle (3pt);
\filldraw [color=ciemnyblekit!80, fill=white!100, thick]   (7,3) circle (3pt);
\filldraw[color=ciemnyblekit](7,2) circle (3pt);
\filldraw[color=ciemnyblekit](7,1) circle (3pt);

\end{tikzpicture}  
\end{equation*}
 The colour cut in this case is not unique:
\begin{equation*}
\begin{tikzpicture} 
[baseline=.4cm,scale=0.5]
\foreach \i in {5.5,2.5} {
  \draw[  ciemnyblekit, thick] (1,\i) -- (0,\i-0.5);
  \draw[ ciemnyblekit, thick] (1,\i) -- (0,\i+0.5);
  \draw[ ciemnyblekit, thick] (6,\i) -- (7,\i-0.5);
  \draw[ ciemnyblekit, thick] (6,\i) -- (7,\i+0.5);

}
  \draw[ densely dotted,ciemnyblekit, thick] (1,5.5) -- (2,5);
  \draw[ ciemnyblekit, thick] (0,4) -- (2,5);
  \draw[ ciemnyblekit, thick] (1,2.5) -- (2,2);
  \draw[ ciemnyblekit, thick] (0,1) -- (2,2);

  \draw[ densely dotted,ciemnyblekit, thick] (6,5.5) -- (5,5);
  \draw[ ciemnyblekit, thick] (7,4) -- (5,5);
  \draw[ ciemnyblekit, thick] (6,2.5) -- (5,2);
  \draw[ ciemnyblekit, thick] (7,1) -- (5,2);

  \draw[ ciemnyblekit, thick] (2,5) -- (3,3.5);
  \draw[ densely dotted,ciemnyblekit, thick] (2,2) -- (3,3.5);
  \draw[ ciemnyblekit, thick] (5,5) -- (4,3.5);
  \draw[ densely dotted,ciemnyblekit, thick] (5,2) -- (4,3.5) ;

  \draw[  ciemnyblekit, thick] (3,3.5) -- (4,3.5) ;

\filldraw [ciemnyblekit]   (0,6) circle (3pt);
\filldraw[color=ciemnyblekit!80, fill=white!100, thick](0,5) circle (3pt);
\filldraw[color=ciemnyblekit!80, fill=white!100, thick](0,4) circle (3pt);
\filldraw [ciemnyblekit]   (0,3) circle (3pt);
\filldraw[color=ciemnyblekit!80, fill=white!100, thick](0,2) circle (3pt);
\filldraw[color=ciemnyblekit!80, fill=white!100, thick](0,1) circle (3pt);

\filldraw[color=ciemnyblekit!80, fill=white!100, thick](7,6) circle (3pt);
\filldraw[ciemnyblekit](7,5) circle (3pt);
\filldraw[ciemnyblekit](7,4) circle (3pt);
\filldraw [color=ciemnyblekit!80, fill=white!100, thick]   (7,3) circle (3pt);
\filldraw[color=ciemnyblekit](7,2) circle (3pt);
\filldraw[color=ciemnyblekit](7,1) circle (3pt);

\end{tikzpicture}  
\qquad \qquad
\begin{tikzpicture} 
[baseline=.4cm,scale=0.5]
\foreach \i in {5.5,2.5} {
  \draw[  ciemnyblekit, thick] (1,\i) -- (0,\i-0.5);
  \draw[ ciemnyblekit, thick] (1,\i) -- (0,\i+0.5);
  \draw[ ciemnyblekit, thick] (6,\i) -- (7,\i-0.5);
  \draw[ ciemnyblekit, thick] (6,\i) -- (7,\i+0.5);

}
  \draw[ densely dotted,ciemnyblekit, thick] (1,5.5) -- (2,5);
  \draw[ ciemnyblekit, thick] (0,4) -- (2,5);
  \draw[ densely dotted,ciemnyblekit, thick] (1,2.5) -- (2,2);
  \draw[ ciemnyblekit, thick] (0,1) -- (2,2);

  \draw[ densely dotted,ciemnyblekit, thick] (6,5.5) -- (5,5);
  \draw[ ciemnyblekit, thick] (7,4) -- (5,5);
  \draw[ densely dotted,ciemnyblekit, thick] (6,2.5) -- (5,2);
  \draw[ ciemnyblekit, thick] (7,1) -- (5,2);

  \draw[ ciemnyblekit, thick] (2,5) -- (3,3.5);
  \draw[ ciemnyblekit, thick] (2,2) -- (3,3.5);
  \draw[ ciemnyblekit, thick] (5,5) -- (4,3.5);
  \draw[ ciemnyblekit, thick] (5,2) -- (4,3.5) ;

  \draw[  ciemnyblekit, thick] (3,3.5) -- (4,3.5) ;

\filldraw [ciemnyblekit]   (0,6) circle (3pt);
\filldraw[color=ciemnyblekit!80, fill=white!100, thick](0,5) circle (3pt);
\filldraw[color=ciemnyblekit!80, fill=white!100, thick](0,4) circle (3pt);
\filldraw [ciemnyblekit]   (0,3) circle (3pt);
\filldraw[color=ciemnyblekit!80, fill=white!100, thick](0,2) circle (3pt);
\filldraw[color=ciemnyblekit!80, fill=white!100, thick](0,1) circle (3pt);

\filldraw[color=ciemnyblekit!80, fill=white!100, thick](7,6) circle (3pt);
\filldraw[ciemnyblekit](7,5) circle (3pt);
\filldraw[ciemnyblekit](7,4) circle (3pt);
\filldraw [color=ciemnyblekit!80, fill=white!100, thick]   (7,3) circle (3pt);
\filldraw[color=ciemnyblekit](7,2) circle (3pt);
\filldraw[color=ciemnyblekit](7,1) circle (3pt);
\end{tikzpicture}  
\end{equation*}
A simple  example with a non-unique monochromatic cut:
\begin{equation*}
\begin{tikzpicture}[scale=0.5]  
 \draw[ densely dotted, ciemnyblekit, thick] (0,2) -- (1,1.5);
 \draw[ ciemnyblekit, thick] (0,1) -- (1,1.5);
 \draw[ densely dotted, ciemnyblekit, thick] (3,2) -- (2,1.5);
 \draw[ ciemnyblekit, thick] (3,1) -- (2,1.5);
 \draw[ ciemnyblekit, thick] (1,1.5) -- (2,1.5);
\filldraw [color=ciemnyblekit!80, fill=white!100, thick](0,2) circle (3pt);
\filldraw[color=ciemnyblekit](0,1) circle (3pt);
\filldraw [color=ciemnyblekit!80, fill=white!100, thick](3,2) circle (3pt);
\filldraw[color=ciemnyblekit](3,1) circle (3pt);
\end{tikzpicture}  
\qquad\qquad
\begin{tikzpicture}[scale=0.5]  
 \draw[ ciemnyblekit, thick] (0,2) -- (1,1.5);
 \draw[ densely dotted, ciemnyblekit, thick] (0,1) -- (1,1.5);
 \draw[ ciemnyblekit, thick] (3,2) -- (2,1.5);
 \draw[ densely dotted, ciemnyblekit, thick] (3,1) -- (2,1.5);
 \draw[ ciemnyblekit, thick] (1,1.5) -- (2,1.5);
\filldraw [color=ciemnyblekit!80, fill=white!100, thick](0,2) circle (3pt);
\filldraw[color=ciemnyblekit](0,1) circle (3pt);
\filldraw [color=ciemnyblekit!80, fill=white!100, thick](3,2) circle (3pt);
\filldraw[color=ciemnyblekit](3,1) circle (3pt);
\end{tikzpicture}  
\end{equation*}

\end{ex}
\begin{prop}\label{size-of-min-and-max-cuts}
\sloppy {Let $\TtT$ be a tree with a subset of leaves $\ccA$. Let $\ccM \in \mincuts \TtT \ccA$ be a minimal monochromatic cut and  $\ccC \in \maxcuts \TtT \ccA$  a maximal colour cut, then 
\[
|\ccM| = |\ccC| +1.
\]
For consistency, if $\maxcuts \TtT A$ is empty, we replace $|\ccC|$ in the above formula by $-1$.
}
\end{prop}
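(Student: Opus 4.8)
The plan is to recast the statement as a numerical identity and prove it by induction on the number of leaves. Write $\minmono(\TtT,\ccA)$ for the minimum size of a monochromatic cut and $\maxcolor(\TtT,\ccA)$ for the maximum size of a colour cut, with $\maxcolor(\TtT,\ccA)=-1$ if no colour cut exists (so that $\maxcolor$ is exactly $|\ccC|$ under the convention in the statement); then the proposition is the assertion that
\[
\minmono(\TtT,\ccA)=\maxcolor(\TtT,\ccA)+1.
\]
One inequality is immediate: if a colour cut $\ccC$ exists, with (bichromatic) components $D_1,\dots,D_{|\ccC|+1}$, and $\ccM$ is any monochromatic cut, then each $D_j$ contains two leaves of opposite colours, which lie in distinct components of $\ccM$; the path joining them stays inside $D_j$, so $\ccM$ cuts an edge of $D_j$. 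As the edge sets of the $D_j$ are pairwise disjoint, $|\ccM|\ge|\ccC|+1$, hence $\maxcolor\le\minmono-1$ (and trivially so if no colour cut exists).

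For the equality I would induct on $n=|\ccL(\TtT)|$. If $\TtT$ is monochromatic then $\minmono=0$ and there is no colour cut, so both sides equal $0$, and for $n\le 2$ one checks the claim by hand. Otherwise, after suppressing degree-$2$ vertices (which changes neither quantity), a binary tree with $n\ge 3$ leaves has a cherry: two leaves $\ell_1,\ell_2$ sharing a neighbour $v$ whose third edge is $vw$. If $\ell_1,\ell_2$ have the same colour, collapse $\{\ell_1,v,\ell_2\}$ to a single leaf of that colour attached at $w$; no smallest monochromatic cut and no colour cut of $\TtT$ uses $v\ell_1$ or $v\ell_2$ (a smallest monochromatic cut would reroute to save an edge; a colour cut cannot isolate the single leaf $\ell_1$ or $\ell_2$), so both quantities are preserved and the inductive hypothesis applies to the smaller tree.

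The decisive case is when $\ell_1,\ell_2$ have opposite colours. The key idea is to \emph{delete} the cherry: put $T_w:=\TtT\setminus\{v,\ell_1,\ell_2\}$, a binary tree on $n-2$ leaves once the degree-$2$ vertex $w$ is suppressed, and prove the two recursions
\[
\minmono(\TtT,\ccA)=\minmono(T_w,\ccA)+1,\qquad \maxcolor(\TtT,\ccA)=\maxcolor(T_w,\ccA)+1.
\]
Combined with the inductive hypothesis $\minmono(T_w,\ccA)=\maxcolor(T_w,\ccA)+1$ (valid since $T_w$ has fewer leaves), these give $\minmono(\TtT,\ccA)=\maxcolor(\TtT,\ccA)+1$. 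Both recursions rest on two facts: a colour cut never cuts $v\ell_1$ or $v\ell_2$ (that would isolate a one-leaf, hence monochromatic, component), whereas a monochromatic cut must cut one of them (it separates the opposite-coloured leaves $\ell_1,\ell_2$). For $\maxcolor$, adjoining $vw$ to a largest colour cut of $T_w$ produces a colour cut of $\TtT$, since the new component $\{v,\ell_1,\ell_2\}$ is automatically bichromatic; conversely, restricting a largest colour cut $\ccC$ of $\TtT$ to $T_w$ gives a colour cut of size $|\ccC|-1$ when $vw\in\ccC$, while if $vw\notin\ccC$ the only component of $T_w$ that might not be bichromatic is the one through $w$, and one repairs it by deleting a single incident cut edge, merging it into a bichromatic neighbour. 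For $\minmono$, a smallest monochromatic cut $\ccM_w$ of $T_w$ has its component $K$ through $w$ monochromatic of some colour $x$ (it contains a leaf, or $\ccM_w$ would not be smallest), and $\ccM_w\cup\{v\ell_{\bar x}\}$ --- cutting off the cherry leaf of the \emph{other} colour --- is a monochromatic cut of $\TtT$ of size $|\ccM_w|+1$; conversely any monochromatic cut of $\TtT$ cuts at least one of $v\ell_1,v\ell_2$, and deleting that edge (and $vw$, if present) leaves a monochromatic cut of $T_w$ with at least one fewer edge. A short case analysis on what a cut does to the cherry then gives the two displayed equalities.

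I expect the main obstacle to be this last case --- specifically, recognising that the correct reduction is to \emph{remove} the bichromatic cherry entirely rather than to merge or subdivide the components of an already-chosen cut (both of which give only weak bounds), together with the careful check of the ``$\le$'' half of the $\maxcolor$-recursion, where the component of $T_w$ through $w$ has to be mended by the merging trick. The monochromatic and small-tree base cases, handled via the convention $|\ccC|=-1$, are routine.
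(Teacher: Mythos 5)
Your argument is correct, and for the substantive inequality it takes a genuinely different route from the paper's. The easy bound $|\ccM|\geq|\ccC|+1$ is the same counting argument in both (every bichromatic component of $\ccC$ must contain an edge of $\ccM$). For the reverse inequality the paper also inducts on the tree, but its reduction is steered by the cuts themselves: it locates a trivalent vertex at an \emph{initial} edge of $\ccC$, contracts the two monochromatic subtrees of opposite colours hanging there into one new leaf whose colour is read off from $\ccM$, and asserts that the restricted cuts $\ccM\cap\ccE(\TtT_3)$ and $\ccC\cap\ccE(\TtT_3)$ are again extremal with both sizes dropped by one. Your reduction is purely structural --- contract a monochromatic cherry (neither quantity changes) or delete a bichromatic one --- and you prove two \emph{independent} recursions $\minmono(\TtT)=\minmono(T_w)+1$ and $\maxcolor(\TtT)=\maxcolor(T_w)+1$ instead of tracking a chosen pair $(\ccM,\ccC)$ through the induction. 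Your version buys a cleaner logical structure (each extremal quantity is computed recursively on its own, and the reduction never presupposes that a colour cut exists or that the two cuts interact), at the price of the bookkeeping you correctly flag: that minimum monochromatic cuts avoid the edges of a same-coloured cherry, and the repair of the component through $w$ in the $\maxcolor$ recursion --- both of which check out, including the degenerate cases where that component is leafless or where $\ccC=\emptyset$. The paper's step is shorter but leaves the extremality and the exact sizes of $\ccM_3$ and $\ccC_3$ essentially unargued. Two side remarks: both proofs tacitly assume the tree is binary (yours to produce a cherry, the paper's to produce a trivalent vertex), and your reading of ``minimal''/``maximal'' as minimum/maximum \emph{cardinality} is the right one, since inclusion-minimal monochromatic cuts need not all have the same size.
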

\begin{proof}
  We prove two inequalities. First we remove all the edges of $\ccC$ from the tree $\TtT$ to obtain a forest of $ |\ccC| +1 $ trees. Each tree has some leaves in the set $\ccA$ and some outside of it. Therefore, each tree must contain an element of $\ccM$ -- our minimal mono cut. This proves that
  \[
|\ccM| \geqslant|\ccC| +1.
    \]
  For the other inequality, we  use induction on the size of the tree $\TtT$ and the size of the set $\ccA$. Let us choose a minimal monochromatic cut $\ccM$. 

 If the set $\ccA$ or its complement are empty,  we stop here: there is no  maximal colour cut, so the right side is $0$. There is exactly one  minimal monochromatic cut $\ccM = \emptyset$, so left side is also $0$.

For the induction step, the set $\ccA$ and its complement are non-empty.
We find a trivalent vertex $v$ (not a leaf), such that the forest of three trees obtained by removing the vertex $v$ consists of
  \begin{itemize}
  \item a tree $\TtT_1$, attached to $v$ by  edge $e_1$, with all leaves in $\ccA$,
   \item a tree  $\TtT_2$, attached to $v$ by edge $e_2$, with all leaves outside of $\ccA$,
  \item a tree $\TtT_3$, attached to $v$ by edge $e_3$.
  \end{itemize}
  Such a vertex exits: let $e_3$ be  "an initial edge" in $\ccC$, that is  removing $e_3$ from $\TtT$, yields two trees,
  one with no edges in $\ccC$, the other is $\TtT_3$.

The new smaller tree for the induction step is $\TtT_3$ and   $v$ becomes its leaf. The new subset of leaves is $\ccA_3$ defined as

\[
\ccA_3=
\begin{cases}
 \ \ccA \cap \ccV(\TtT_3) &  \text{if }   e_1\in \ccM, \\
 \ \{v\} \cup (\ccA \cap \ccV(\TtT_3))  & \text{if }  e_2\in \ccM.
\end{cases}
\]

This new leaf $v$ is in $\ccA_3$ provided that the minimal cut contains edge $e_2$, and is outside of $\ccA_3$, if it contains $e_1$. 
We note that $\ccM_3=\ccM \cap \ccE(\TtT_3)$ is a minimal monochromatic cut for the tree $\TtT_3$ and  $\ccC_3=\ccC \cap \ccE(\TtT_3)$ is a maximal colour cut for $\TtT_3$. 
Finally, $|\ccC_3|=|\ccC|-1$ and $|\ccM_3|=|\ccM|-1 $, and  by induction 
$|\ccM_3| \leqslant |\ccC_3|+1$.
This ends the proof.

\end{proof}
\begin{fact}
  Let $t_1 \in V_1=W_1\otimes W'_1$ and $t_2 \in V_2=W_2\otimes W'_2$. Then 
$$
  \left ( W_1 \otimes W_2 \right )^* \hook \left(t_1 \otimes t_2 \right)=
W_1^* \hook t_1 \otimes W_2^* \hook t_2
$$
\end{fact}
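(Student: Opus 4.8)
The plan is to read both sides as images of contraction maps and then to observe that, once the standard identifications are in place, the identity expresses nothing more than the multiplicativity of contraction on decomposable tensors together with the definition of the tensor product of two subspaces. Throughout I use that all spaces are finite-dimensional, so that the canonical identification $(W_1 \otimes W_2)^* \cong W_1^* \otimes W_2^*$ holds, and I freely use the reshuffling isomorphism $V_1 \otimes V_2 \cong (W_1 \otimes W_2) \otimes (W'_1 \otimes W'_2)$.

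First I would make the two sides precise. For $i=1,2$ the space $W_i^* \hook t_i$ is by definition the image of the linear map $T_i \colon W_i^* \to W'_i$, $\phi \mapsto \phi \hook t_i$; similarly $(W_1 \otimes W_2)^* \hook (t_1 \otimes t_2)$ is the image of $T \colon (W_1 \otimes W_2)^* \to W'_1 \otimes W'_2$, $\psi \mapsto \psi \hook (t_1 \otimes t_2)$. And for subspaces $A_i \subseteq W'_i$, the tensor product $A_1 \otimes A_2$ is understood as the subspace of $W'_1 \otimes W'_2$ spanned by the vectors $a_1 \otimes a_2$ with $a_i \in A_i$. So the claim is the equality of subspaces $\operatorname{im} T = \operatorname{im}(T_1) \otimes \operatorname{im}(T_2)$.

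The one computation is the decomposable identity $(\phi_1 \otimes \phi_2) \hook (t_1 \otimes t_2) = (\phi_1 \hook t_1) \otimes (\phi_2 \hook t_2)$ for $\phi_i \in W_i^*$; writing each $t_i$ as a finite sum of decomposable tensors and reshuffling the factors, this reduces to bilinearity and I would not write it out. Granting it, the decomposable functionals $\phi_1 \otimes \phi_2$ span $(W_1 \otimes W_2)^*$, so $\operatorname{im} T$ is spanned by the vectors $(\phi_1 \hook t_1) \otimes (\phi_2 \hook t_2)$; and since $\phi_i \hook t_i$ runs over all of $\operatorname{im} T_i$ as $\phi_i$ runs over $W_i^*$, this span is exactly $\operatorname{im}(T_1) \otimes \operatorname{im}(T_2)$ by the definition recalled above. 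That completes the proof.

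The statement is elementary and I do not expect a genuine obstacle; the only things that need care are bookkeeping: identifying $W_i^* \hook t_i$ with an image subspace rather than a single vector, tracking the reshuffling isomorphism in the decomposable computation, and remembering that the tensor product of two subspaces means their span of pure tensors inside the ambient space. If one prefers to avoid these, an equally short alternative is to choose minimal decompositions $t_i = \sum_{k=1}^{r_i} a^{(i)}_k \otimes b^{(i)}_k$ with $(a^{(i)}_k)_k$ linearly independent in $W_i$ and $(b^{(i)}_k)_k$ linearly independent in $W'_i$, so that $W_i^* \hook t_i = \operatorname{span}\{b^{(i)}_k\}$, and then read both sides off the decomposition $t_1 \otimes t_2 = \sum_{k,l} (a^{(1)}_k \otimes a^{(2)}_l) \otimes (b^{(1)}_k \otimes b^{(2)}_l)$ directly, using that the families $(a^{(1)}_k \otimes a^{(2)}_l)_{k,l}$ and $(b^{(1)}_k \otimes b^{(2)}_l)_{k,l}$ are again linearly independent.
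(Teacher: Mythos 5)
Your argument is correct: the decomposable identity $(\phi_1\otimes\phi_2)\hook(t_1\otimes t_2)=(\phi_1\hook t_1)\otimes(\phi_2\hook t_2)$ plus the fact that decomposable functionals span $(W_1\otimes W_2)^*$ gives exactly the claimed equality of image subspaces, and your alternative via minimal decompositions is equally valid. The paper states this as a Fact with no proof at all, treating it as standard multilinear algebra, so your write-up simply supplies the routine verification the author omitted; there is nothing to compare beyond that.
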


\begin{lem}[{Lemma~4.1, \cite{nisiabu_jabu_mateusz_hackbusch}}]\label{rank_in_big_tree_from_two_disjoint_subtrees}
   Fix any subset $\ccA \subset \ccL(\TtT)$ of leaves of a tree $\TtT$,
      and choose two disjoint subtrees $\TtT'$ and $\TtT''$  and set
$\ccA'= \ccA \cap \ccL(\TtT')$ and 
$\ccA''= \ccA \cap \ccL(\TtT'')$.
Define $q' : = \dim \left(\left(\bigotimes_{l\in A'} V_l\right)^* \hook t'\right)$ 
and $q'' : = \dim \left(\left(\bigotimes_{l\in A''} V_l\right)^* \hook t''\right)$.
Then there exists a tensor $t = t' \otimes t''\in \TNS(\TtT, r)$ such that
\[
 \dim \left(\left(\bigotimes_{l\in A} V_l\right)^* \hook t\right) = q' q''.
\]
\end{lem}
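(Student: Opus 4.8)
The plan is to take $t:=t'\otimes t''$ itself. If $\TtT'$ and $\TtT''$ do not already exhaust $\ccL(\TtT)$ I would first tensor this with an arbitrary nonzero decomposable tensor on the remaining leaves; a decomposable tensor contributes a factor $1$ to every flattening rank, so nothing is lost, and I suppress this below and assume $\ccL(\TtT)=\ccL(\TtT')\sqcup\ccL(\TtT'')$. Then also $\ccA=\ccA'\sqcup\ccA''$ and $\ccE(\TtT)=\ccE(\TtT')\sqcup\{e_0\}\sqcup\ccE(\TtT'')$, where $e_0$ is the single edge of $\TtT$ joining the two subtrees. One may assume $t'\neq 0\neq t''$, for otherwise $q'q''=0$ and $t=0$ does the job.

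First I would read off the flattening of $t$ with respect to $\ccA$. Factoring $\bigotimes_{l\in\ccL(\TtT')}V_l=\bigl(\bigotimes_{l\in\ccA'}V_l\bigr)\otimes\bigl(\bigotimes_{l\in\ccL(\TtT')\setminus\ccA'}V_l\bigr)$ and likewise for $\TtT''$, the Fact above gives
\[
\left(\bigotimes_{l\in\ccA}V_l\right)^{*}\hook t=\left(\left(\bigotimes_{l\in\ccA'}V_l\right)^{*}\hook t'\right)\otimes\left(\left(\bigotimes_{l\in\ccA''}V_l\right)^{*}\hook t''\right),
\]
and the dimension of a tensor product of two subspaces is the product of the dimensions, so this space is $q'q''$-dimensional, which is the equality claimed.

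Next I would verify $t\in\TNS(\TtT,r)$ via the edge definition (Proposition~\ref{prop_equivalent_def_of_TNS}, Definition~\ref{definintion-edge-def-of-TNS}): one has to bound $\dim\bigl(\bigotimes_{l\in\overset{\leftarrow}{e}}V_l\bigr)^{*}\hook t$ for each edge $e$. If $e\in\ccE(\TtT')$, removing $e$ from $\TtT$ yields two parts, one of which has leaf set $L_e\subseteq\ccL(\TtT')$ and is exactly one of the two parts $e$ cuts $\TtT'$ into; by the lemma that hooking in a set or its complement gives the same dimension this bound equals $\dim\bigl(\bigotimes_{l\in L_e}V_l\bigr)^{*}\hook t$, and the Fact (leaving the $\TtT''$-factor untouched) rewrites it as $\dim\bigl(\bigotimes_{l\in L_e}V_l\bigr)^{*}\hook t'$, the flattening rank of $t'$ at the edge $e$ of $\TtT'$. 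The case $e\in\ccE(\TtT'')$ is symmetric, and for $e_0$ itself the Fact gives $\bigl(\bigotimes_{l\in\ccL(\TtT')}V_l\bigr)^{*}\hook t=\langle t''\rangle$ since the full contraction of the nonzero $t'$ spans the scalars, so the bound is $1$. Hence every edge inequality holds for the function $r$ that equals the flattening ranks of $t'$ on $\ccE(\TtT')$, those of $t''$ on $\ccE(\TtT'')$, and $1$ on $e_0$; it then also holds for any $r$ dominating this one, in particular for a function defining an ambient $\TNS$ inside which $t'$ and $t''$ may have been chosen.

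The one place needing care --- the main obstacle, such as it is --- is the combinatorial bookkeeping in the third paragraph: one must check that each edge $e\neq e_0$ cuts $\TtT$ so that one of the two parts has all its leaves in a single subtree and coincides with a part of the corresponding cut of that subtree, so that the inequality for $t$ at $e$ reduces verbatim to the one for $t'$ or $t''$. The rest is just the Fact and invariance of rank under transposition; the real content is that flattening rank is multiplicative under $\otimes$ and that joining two $\TNS$ tensors along one fresh edge costs only bond dimension $1$.
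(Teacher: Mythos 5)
Your proposal is correct, and it is essentially the intended argument: the paper does not reprove this lemma (it imports it as Lemma~4.1 of the earlier article), but the Fact recorded immediately before it, $\left(W_1\otimes W_2\right)^*\hook(t_1\otimes t_2)=W_1^*\hook t_1\otimes W_2^*\hook t_2$, is precisely the multiplicativity of flattening rank that your second paragraph rests on, and your third paragraph supplies the routine check that $t'\otimes t''$ still satisfies the edge bounds of $\TNS(\TtT,r)$. The only points where you go beyond the statement --- padding with a decomposable tensor when $\ccL(\TtT')\sqcup\ccL(\TtT'')\subsetneq\ccL(\TtT)$, and reducing the connecting path to a single edge $e_0$ --- are reasonable readings of a loosely stated lemma and do not affect correctness.
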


\subsection{Optimal function}
\begin{defin}
  The function $f : \ccE \ra \NN$ on edges of the tree is \emph{optimal} if for every edge $e\in \ccE(\TtT)$ the flattening rank of a generic tensor  $t\in \TNS(\TtT,f)$  at $e$ is equal to $f(e)$:
\[
\dim \left( \bigotimes_{l\in \overleftarrow e}V_l ^* \hook t \right) = f(e).
\]
\end{defin}
\begin{rmk}
  The other way of saying the function $f$ is optimal is that it is the smallest function that gives the variety in question. In particular, for every edge $e$  the bound $f(e)$ is attained for a general tensor in the $\TNS$, for the  flattening associated to the edge in question.
\end{rmk}
The algorithm that transforms a function into an optimal one is described in the proof of Proposition~2.7~of~\cite{nisiabu_jabu_mateusz_hackbusch}. From a given function we construct a function $f'$, which is the optimal function and defines the same $\TNS$ as the function $f$.
\begin{fact}
  The constant function is optimal if its value is not bigger then the dimension of the vector spaces at the leaves of the tree.
\end{fact}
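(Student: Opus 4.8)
The only inequality with content here is the lower bound: by Definition~\ref{definintion-edge-def-of-TNS} every $t\in\TNS(\TtT,f)$ already satisfies $\dim\left(\left(\bigotimes_{l\in\overleftarrow e}V_l\right)^*\hook t\right)\le f(e)=r$ at each edge $e$, so the plan is to produce tensors where equality holds. First I would reduce to a single edge. The variety $\TNS(\TtT,f)$ is irreducible, being the closure of the image of the parametrisation underlying Definition~\ref{standard_definition_of_TNS}; and for a fixed edge $e$ the flattening rank $t\mapsto\dim\left(\left(\bigotimes_{l\in\overleftarrow e}V_l\right)^*\hook t\right)$ is lower semicontinuous, as it is the rank of a matrix whose entries are linear in $t$. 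Hence, for each edge, the locus in $\TNS(\TtT,f)$ where this rank equals $r$ is Zariski open, and it suffices to show it is nonempty: the intersection of these finitely many nonempty open sets is then dense, so a generic tensor of $\TNS(\TtT,f)$ has flattening rank exactly $r$ at every edge simultaneously.

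For a fixed edge $e$ I would build the witness by hand. Using Proposition~\ref{prop_equivalent_def_of_TNS} I may place the root on $e$, so that deleting $e$ splits $\TtT$ into rooted binary subtrees $\TtT'$ with leaf set $\overleftarrow e$ and $\TtT''$ with the complementary leaf set, every other vertex lying in exactly one of them. The first step is an auxiliary induction: \emph{for any rooted binary tree $S$ all of whose leaves carry spaces of dimension at least $r$, the subspaces $\{U_v\}$ of Definition~\ref{standard_definition_of_TNS} for the constant function $r$ can be chosen so that the root subspace has dimension exactly $r$} --- for a single leaf $i$ because $r\le\dim V_i$, and for an internal root with children subtrees whose root subspaces $U_1,U_2$ already have dimension $r$ because $\dim(U_1\otimes U_2)=r^2\ge r$ leaves room to pick an $r$-dimensional subspace. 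Applying this to $\TtT'$ and $\TtT''$ yields $r$-dimensional subspaces $U'\subset\bigotimes_{l\in\overleftarrow e}V_l$ and $U''\subset\bigotimes_{l\notin\overleftarrow e}V_l$, each realised by a valid assignment of subspaces on the respective side, and I set $t_e:=\sum_{k=1}^{r}a_k\otimes b_k$ for bases $a_1,\dots,a_r$ of $U'$ and $b_1,\dots,b_r$ of $U''$.

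Then I would verify the two required properties of $t_e$. First, linear independence of the $a_k$ and of the $b_k$ forces $t_e$ to have rank $r$ as an element of $\left(\bigotimes_{l\in\overleftarrow e}V_l\right)\otimes\left(\bigotimes_{l\notin\overleftarrow e}V_l\right)$, which is precisely the flattening rank at $e$. Second, $t_e\in\TNS(\TtT,f)$: any other edge $g$ corresponds to a vertex $v$ of, say, $\TtT'$ with $\da v\subseteq\overleftarrow e$, and from the valid assignment on $\TtT'$ one obtains, by a short downward induction, a containment $U'\subseteq U_v\otimes\left(\bigotimes_{l\in\overleftarrow e\setminus\da v}V_l\right)$ with $\dim U_v\le r$; tensoring with the span of the $b_k$ gives $t_e\in U_v\otimes\left(\bigotimes_{l\notin\da v}V_l\right)$, so the flattening rank of $t_e$ at $g$ is at most $\dim U_v\le r$, and the value at the root is immaterial by Proposition~\ref{prop_equivalent_def_of_TNS}. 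Symmetry handles $g$ lying in $\TtT''$. This completes the per-edge construction, and hence the proof.

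The step I expect to be most delicate is not conceptual but organisational: keeping the tensor-product factorisations straight through the re-rooting --- in particular the containment $U'\subseteq U_v\otimes\bigotimes_{l\in\overleftarrow e\setminus\da v}V_l$ and the identification of the leaf sets on the two sides --- together with phrasing the semicontinuity/irreducibility reduction cleanly. As an alternative that avoids the explicit witness altogether, one could instead invoke the optimisation procedure from the proof of Proposition~2.7 of~\cite{nisiabu_jabu_mateusz_hackbusch}: since a constant value $r\le\dim V_i$ is not too large at any leaf edge, and $r\le r^2$ makes it not too large at any internal edge, that procedure lowers no value, so the constant function already coincides with the optimal one it returns.
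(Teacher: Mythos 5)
Your argument is correct, and it supplies more than the paper does: the paper records this statement as a Fact with no proof at all, implicitly leaning on the sentence just before it, namely that the optimisation algorithm of Proposition~2.7 of \cite{nisiabu_jabu_mateusz_hackbusch} converts any function into the optimal one defining the same variety --- which is precisely the alternative you sketch in your closing sentence (a constant $r$ with $r\le\dim V_i$ at every leaf and $r\le r^2$ at every internal edge is left unchanged by that procedure). Your main, self-contained route is different in presentation but uses exactly the genericity mechanism the paper itself deploys in the proof of Theorem~\ref{rank_of_a_tensor_and_set_A}: irreducibility of $\TNS(\TtT,r)$ plus lower semicontinuity of the flattening rank reduce the problem to exhibiting, for each edge $e$ separately, one tensor of flattening rank $r$ at $e$; and your witness $t_e=\sum_k a_k\otimes b_k$, built from compatible $r$-dimensional subspaces on the two sides of $e$ (possible precisely because $r\le\dim V_i$ at the leaves and $r\le r^2$ above them), does the job. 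The verification that $t_e$ stays in the variety is right: the downward containment $U'\subseteq U_v\otimes\left(\bigotimes_{l\in\overleftarrow{e}\setminus\da v}V_l\right)$ bounds the rank at every other edge by $\dim U_v\le r$, and linear independence of the $a_k$ and $b_k$ gives rank exactly $r$ at $e$. The one ingredient worth stating explicitly rather than in passing is the irreducibility of $\TNS(\TtT,r)$ (it is the closure of the image of a tower of Grassmannian bundles), since without it ``generic'' and the intersection of the finitely many per-edge open loci are not meaningful; the paper assumes this silently throughout, so you are not filling a gap so much as making the standing convention visible.
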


\subsection {An upper bound on the rank}
\begin{thm}\label{upper_bound_any_function}
Let $\TtT$ be a tree, $\ccL$ its set of leaves, $f: \ccE \ra \NN$ a  function defining a $\TNS$. Let $\ccA$ be a subset of leaves of the tree $\TtT$ and  let $\ccM$ be  a monochromatic cut, i.e. a subset of edges such that after removing them from the tree we get a forest of trees, each with all leaves either in $\ccA$ or $\ccL(\TtT) \setminus \ccA$.
  Then the flattening rank of any tensor in $\TNS(\TtT, f)$ with respect to  $\ccA$ is not bigger then
$ \prod_{e\in \ccM} f(e)$.  
\end{thm}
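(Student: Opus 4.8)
The plan is to induct on the size $|\ccM|$ of the monochromatic cut, at each step peeling off one ``outermost'' edge of the cut. If $|\ccM|=0$ then $\TtT$ is monochromatic, so $\ccA=\emptyset$ or $\ccA=\ccL$, and in either case $\dim\left(\left(\bigotimes_{l\in\ccA}V_l\right)^*\hook t\right)\le 1=\prod_{e\in\ccM}f(e)$; so assume henceforth $|\ccM|\ge 1$.

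Removing the edges of $\ccM$ from $\TtT$ produces a forest of $|\ccM|+1\ge 2$ monochromatic trees; contracting each of them to a point yields a tree on $|\ccM|+1$ vertices, which has at least one leaf. That leaf corresponds to a monochromatic subtree $\TtT'\subset\TtT$ which is joined to the rest of $\TtT$ by a single edge $e_0\in\ccM$ and contains no edge of $\ccM$. Let $\TtT''$ be the other component of $\TtT\setminus\{e_0\}$, and write $\ccL'=\ccL(\TtT')$, $\ccL''=\ccL(\TtT'')$ and $f''=f|_{\ccE(\TtT'')}$, so that $\ccL=\ccL'\sqcup\ccL''$ and $\ccL'$ is the set of leaves on the $\TtT'$-side of $e_0$. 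Since a flattening rank is unchanged when $\ccA$ is replaced by $\ccL\setminus\ccA$ (the lemma on hooking a set versus its complement), while $\ccM$ remains a monochromatic cut and $\prod_{e\in\ccM}f(e)$ is unchanged, we may assume $\ccL'\subset\ccA$. Put $\ccA''=\ccA\cap\ccL''$ and $\ccM''=\ccM\setminus\{e_0\}$. As $\TtT'$ contains no edge of $\ccM$, every edge of $\ccM''$ lies in $\TtT''$, the components of $\TtT''\setminus\ccM''$ are exactly the components of $\TtT\setminus\ccM$ other than $\TtT'$, and so $\ccM''$ is a monochromatic cut for $\TtT''$ and $\ccA''$ with $|\ccM''|=|\ccM|-1$.

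The heart of the argument is the claim that \emph{for every $\phi\in\left(\bigotimes_{l\in\ccL'}V_l\right)^*$ the partial contraction $t_\phi:=(\phi\otimes\mathrm{id})(t)\in\bigotimes_{l\in\ccL''}V_l$ lies in $\TNS(\TtT'',f'')$.} To prove it, fix an edge $e'$ of $\TtT''$; removing $e'$ from $\TtT''$ splits $\ccL''$ into two parts, and let $S\subset\ccL''$ be the part which, inside $\TtT$, lies on the same side of $e'$ as $\TtT'$. Writing $\ccL=\ccL'\sqcup S\sqcup(\ccL''\setminus S)$, directly from the definition of the hook one has $\chi\hook t_\phi=(\phi\otimes\chi)\hook t$ for every $\chi\in\left(\bigotimes_{l\in S}V_l\right)^*$, whence
\[
\left(\bigotimes_{l\in S}V_l\right)^*\hook t_\phi\ \subseteq\ \left(\bigotimes_{l\in\ccL'\cup S}V_l\right)^*\hook t.
\]
Now $\ccL'\cup S$ is precisely the set of leaves on one side of $e'$ in the big tree $\TtT$, that is $\ccL'\cup S=\overleftarrow{e'}$ up to orientation, so by Proposition~\ref{prop_equivalent_def_of_TNS} the right-hand space has dimension at most $f(e')$. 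Since the flattening rank of $t_\phi$ at $e'$ inside $\TtT''$ equals the dimension of the flattening taken with respect to either part of the bipartition of $\ccL''$, this gives $\dim\left(\left(\bigotimes_{l\in S}V_l\right)^*\hook t_\phi\right)\le f''(e')$, and the claim follows.

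Finally, by Proposition~\ref{prop_equivalent_def_of_TNS} the flattening of $t$ at $e_0$ has some rank $r\le f(e_0)$, so we may choose $\phi_1,\dots,\phi_r$ whose partial contractions $b_s:=t_{\phi_s}$ form a basis of the image of that flattening inside $\bigotimes_{l\in\ccL''}V_l$, and write $t=\sum_{s=1}^r a_s\otimes b_s$ with $a_s\in\bigotimes_{l\in\ccL'}V_l$; by the claim each $b_s\in\TNS(\TtT'',f'')$. For $\psi\in\left(\bigotimes_{l\in\ccL'}V_l\right)^*$ and $\eta\in\left(\bigotimes_{l\in\ccA''}V_l\right)^*$ we have $(\psi\otimes\eta)\hook t=\sum_{s=1}^r\psi(a_s)\,(\eta\hook b_s)$; since $\ccA=\ccL'\sqcup\ccA''$ and $\ccL\setminus\ccA=\ccL''\setminus\ccA''$, the vectors $(\psi\otimes\eta)\hook t$ span $\left(\bigotimes_{l\in\ccA}V_l\right)^*\hook t$ and each of them lies in $\sum_{s=1}^r\left(\left(\bigotimes_{l\in\ccA''}V_l\right)^*\hook b_s\right)$. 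Therefore
\[
\dim\left(\left(\bigotimes_{l\in\ccA}V_l\right)^*\hook t\right)\ \le\ \sum_{s=1}^r\dim\left(\left(\bigotimes_{l\in\ccA''}V_l\right)^*\hook b_s\right)\ \le\ r\cdot\prod_{e\in\ccM''}f(e)\ \le\ \prod_{e\in\ccM}f(e),
\]
the middle inequality being the inductive hypothesis for $\TtT''$, $f''$, $\ccA''$ and $\ccM''$. This closes the induction. The step I expect to be the real obstacle is the claim that partial contractions remain inside the smaller tensor network variety: the delicate point is to match, for each edge $e'$ of $\TtT''$, the correct side of its bipartition of $\ccL''$ with a genuine flattening $\overleftarrow{e'}$ of $\TtT$, so that the bound $f(e')$ can be invoked; the remaining ingredients are only the base case, the transpose-lemma reduction, and the displayed rank count.
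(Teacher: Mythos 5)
Your proof is correct and follows essentially the same route as the paper: an induction that peels off an initial (outermost) edge $e_0$ of the cut $\ccM$, decomposes $t=\sum_s a_s\otimes b_s$ across $e_0$ with at most $f(e_0)$ summands, and applies the inductive bound $\prod_{e\in\ccM\setminus\{e_0\}}f(e)$ to the pieces on the remaining tree. The only (welcome) differences are cosmetic: you merge the paper's two cases via the complement lemma, and you spell out the verification that the partial contractions $b_s$ actually lie in $\TNS(\TtT'',f'')$, which the paper asserts without detail.
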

\begin{proof}
To prove the  inequality, let  $t\in\TNS(\TtT,f)$ be a tensor and 
let  $e \in \ccM$ be an initial edge of $\ccM$. By this we mean that removing $e$ from $\TtT$  yields two trees:
\begin{itemize}
\item a tree $\TtT_1$ with all  leaves either in or outside of $\ccA$ and 
\item a tree $\TtT_2$ which is the rest of the tree.
\end{itemize}

  Let us place the root of the tree $\TtT$ on the edge $e$.
  We denote by $U_1$  and $U_2$ the vector spaces at the two ends of $e$ ---  roots of respectively $\TtT_1$ and $\TtT_2$.
  Thus, by definition $t \in U_1 \otimes U_2$. So we write
  $t = \sum_{i=1}^{f(e)} \alpha_i\otimes \beta_i$ where $\alpha_i \in U_1$ is a basis of $U_1$ and
  $\beta_i \in U_2$  a basis of $U_2$.

Let us write $\ccA_1 = \ccL(\TtT_1) \cap \ccA$ and $\ccA_2 = \ccL(\TtT_2) \cap \ccA$, 
 $\ccA^*$ for $\bigotimes_{l\in \ccA} V_l^*$, similarly $\ccA^*_1$ and $\ccA^*_2$.

We know  $e \in \ccM$ and there are two cases:

First case is when $\ccL(\TtT_1) \cap \ccA = \emptyset$.
Then the flattening space of the tensor $t$ with respect to $\ccA^*$ is contained in the algebraic sum of vector spaces:
\[
\ccA^* \hook t \subset  \sum_{i=1}^r \alpha_i \otimes \left ( \ccA^*_2  \hook \beta_i \right ) \simeq \bigoplus_{i=1}^r  ( \ccA^*_2  \hook \beta_i ). 
\]
Denote by $\ccM_2 = \ccM \setminus\{e\}$. As $\beta_i \in \TNS (\TtT_2)$, 
by induction we have
\[ 
\dim (\ccA^*_2  \hook \beta_i) \leq 
 \prod_{e\in \ccM_2} f(e).
\]
Combining the above  we get the required inequality, namely
\[ \dim \ccA^* \hook t \leq  \prod_{e\in \ccM} f(e).
\]  

The second case is when $\ccL(\TtT_1) \subset \ccA $.
Since all leaves of $\TtT_1$ are in $\ccA$, we have
\[
\ccA^* \hook t \subset \sum_{i=1}^r \left ( \ccA^*_1 \hook \alpha_i \right ) \otimes  \left ( \ccA^*_2 \hook \beta_i 
\right ) \subset \sum_{i=1}^r  \CC  \otimes \left (  \ccA^*_2 \hook \beta_i \right ) 
= \sum_{i=1}^r   (\ccA^*_2 \hook \beta_i ).
\]
Thus, as before 
\[ \dim \ccA^* \hook t \leq \sum_{i=1}^{f(\varepsilon)} \prod_{e\in \ccM_2} f(e) = f(\varepsilon) \cdot \prod_{e\in \ccM_2} f(e) = \prod_{e\in \ccM} f(e).
\]  
  
\end{proof}

\subsection {The rank for constant function}
\begin{thm}\label{rank_of_a_tensor_and_set_A}
Let $\TtT$ be a tree, $\ccL$ its set of leaves, $f: \ccE \ra \NN$ a constant function equal to~$r$. Let $\ccA$ be a subset of leaves of the tree $\TtT$.
  Then the flattening rank of a generic tensor in $\TNS(\TtT, r)$ with respect to  $\ccA$ equals
$    r^{\minmono|\TtT,\ccA|} $.
 \end{thm}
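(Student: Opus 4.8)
The plan is to prove the two matching inequalities. The upper bound $\dim(\ccA^* \hook t) \le r^{\minmono|\TtT,\ccA|}$ is immediate from Theorem~\ref{upper_bound_any_function}: pick any minimal monochromatic cut $\ccM \in \mincuts \TtT \ccA$, apply that theorem with the constant function $f \equiv r$, and note $\prod_{e \in \ccM} f(e) = r^{|\ccM|} = r^{\minmono|\TtT,\ccA|}$. This holds for \emph{every} tensor in $\TNS(\TtT,r)$, in particular for a generic one. So the work is entirely in the lower bound: exhibiting a tensor (hence a generic one, since flattening rank is lower semicontinuous and the maximum over the irreducible variety is attained on a dense open set) whose flattening rank with respect to $\ccA$ equals $r^{\minmono|\TtT,\ccA|}$.

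For the lower bound I would induct on the size of the tree (number of edges), mirroring the structure of the proof of Proposition~\ref{size-of-min-and-max-cuts}. If $\ccA$ or its complement is empty, $\minmono|\TtT,\ccA| = 0$ and any rank-one generic tensor has flattening rank $1 = r^0$; so assume both are non-empty. Using Proposition~\ref{size-of-min-and-max-cuts}, fix a maximal colour cut $\ccC$ and a compatible minimal monochromatic cut $\ccM$ with $|\ccM| = |\ccC|+1$, and locate the trivalent vertex $v$ with subtrees $\TtT_1$ (all leaves in $\ccA$, attached by $e_1$), $\TtT_2$ (all leaves outside $\ccA$, attached by $e_2$), and $\TtT_3$ (the rest, attached by $e_3$), exactly as in that proof, where $e_3$ is an initial edge of $\ccC$ and one of $e_1, e_2$ lies in $\ccM$. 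Form the smaller tree $\TtT_3$ with $v$ as a new leaf and $\ccA_3$ as defined there; by induction there is $t_3 \in \TNS(\TtT_3, r)$ with flattening rank $r^{\minmono|\TtT_3,\ccA_3|} = r^{|\ccM_3|} = r^{|\ccM|-1}$ with respect to $\ccA_3$. Separately, on the subtree $\TtT_0$ consisting of $v$ together with $\TtT_1$ and $\TtT_2$ (so that $v$'s children are the roots of $\TtT_1$ and $\TtT_2$), take a generic $t_0 \in \TNS(\TtT_0, r)$: since the constant function is optimal (the \textbf{Fact} after the optimal-function remark), its flattening rank at the edge above $v$ is $r$, i.e. $\dim((\bigotimes_{l \in \da v} V_l)^* \hook t_0) = r$, and this is exactly the flattening with respect to $\ccL(\TtT_1) = \ccA \cap \ccL(\TtT_0)$ (or its complement, which gives the same rank by the transpose lemma). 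Now I want to glue $t_0$ and $t_3$ along the leaf $v$. The natural device is Lemma~\ref{rank_in_big_tree_from_two_disjoint_subtrees}: but that lemma is about \emph{disjoint} subtrees and a tensor product, whereas here $\TtT_0$ and $\TtT_3$ share the vertex $v$. So the key step is a contraction/identification argument: identify the copy of $V_v := U_v$ sitting at the leaf $v$ of $\TtT_3$ with the space $U_v$ at $v$ inside $\TtT_0$, and contract $t_0 \otimes t_3$ along $V_v \otimes V_v^*$ (equivalently, realize the glued tensor via the $\TNS$ construction on the full tree, with $U_v$ chosen as the span appearing in $t_0$'s flattening at $v$).

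The heart of the matter, and the step I expect to be the main obstacle, is showing that after this gluing the flattening rank with respect to $\ccA$ is the full product $r \cdot r^{|\ccM|-1} = r^{|\ccM|}$, i.e. that no rank is lost in the contraction. Concretely: writing $t_0 = \sum_{i=1}^r a_i \otimes \gamma_i$ with $\{a_i\}$ a basis of $U_v \subset \bigotimes_{l\in\da v}V_l$ realized by the flattening at $v$ (here $\gamma_i$ lives in the complementary tensor factors inside $\TtT_0$ — but since $\TtT_1,\TtT_2$ are all of $\TtT_0$ below $v$, actually $t_0 \in U_v$ and we instead expand $t_0$ itself in the basis, so the flattening $\ccA_1^* \hook t_0$ where $\ccA_1 = \ccL(\TtT_1)$ has dimension $r$), and writing $t_3 = \sum_j v_j^* \otimes \delta_j$ as an element of $V_v \otimes (\text{rest of }\TtT_3)$, the glued tensor is $t = \sum_{i} a_i \otimes \delta_i$ after identifying indices through $U_v$. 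Its flattening with respect to $\ccA = \ccA_1 \sqcup (\ccA \cap \ccL(\TtT_3))$ factors — by the \textbf{Fact} on hooking tensor products over complementary factors — as a "block" structure whose rank is the product of $\dim(\ccA_1^* \hook \{a_i\text{-part}\}) = r$ and $\dim((\ccA\cap\ccL(\TtT_3))^* \hook t_3\text{-part})$; the subtlety is that the case split on whether $e_1$ or $e_2$ lies in $\ccM$ determines whether $v \in \ccA_3$, and one must check the two cases separately, matching the $\ccA_3$ from Proposition~\ref{size-of-min-and-max-cuts}: when $e_1 \in \ccM$ the leaf $v$ plays the role of a "white" leaf on the $\TtT_3$ side and the $r$-dimensional contribution comes from $\TtT_1$; when $e_2 \in \ccM$, $v$ is "dark" and the bookkeeping is symmetric. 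Verifying that the two flattenings are "independent" — that the span of the $a_i$-flattening and the span of the $\delta_i$-flattening genuinely multiply rather than collapse — is where genericity of $t_0$ and $t_3$ is used, and is the technical crux; I would make it precise either by the tensor-product/transpose machinery of Lemma~\ref{rank_in_big_tree_from_two_disjoint_subtrees} applied to a slightly reorganized (genuinely disjoint) pair of subtrees, or by a direct rank computation on the explicit $t = \sum_i a_i \otimes \delta_i$.
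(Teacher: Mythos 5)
Your proposal is correct and follows essentially the same route as the paper: upper bound from Theorem~\ref{upper_bound_any_function}, lower bound by induction using the trivalent vertex $v$ and the subtrees $\TtT_1,\TtT_2,\TtT_3$ from the proof of Proposition~\ref{size-of-min-and-max-cuts}, a product tensor, Lemma~\ref{rank_in_big_tree_from_two_disjoint_subtrees}, and semicontinuity of the flattening rank. The contraction difficulty you flag is exactly what the paper sidesteps by taking the genuinely disjoint pair $\TtT_1\cup_{e_1-e_2}\TtT_2$ and $\TtT_3$ (merging $e_1$ and $e_2$ into a single edge), i.e.\ the first of your two proposed fixes, so that Lemma~\ref{rank_in_big_tree_from_two_disjoint_subtrees} applies directly to $t_{12}\otimes t_3$.
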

 \begin{proof}
   We  prove  two inequalities. The upper bound for the rank is a special case of Theorem~\ref{upper_bound_any_function}. For the lower bound we  argue  by induction on the size of the tree to  construct a tensor with the required flattening rank. 

Let $\ccC$ be a maximal colour cut of the tree $\TtT$ with the set $\ccA$ and $\ccM$ be a minimal monochromatic cut for the same tree and set. 
Let $v$, $e_1$, $e_2$, $e_3$,  $\TtT_3$, $\ccC_3$, $\ccM_3$, $\ccA_3$, $\TtT_1$, $\TtT_2$  be as in the proof of Proposition~\ref{size-of-min-and-max-cuts}. Let also $\ccA_1 = \ccA \cap \ccL (\TtT_1)$ and  $\ccA_2 = \ccA \cap \ccL (\TtT_2)$.

To start the induction let $\TtT$ be a tree with  at most tree leaves, then  
any $\ccM\in\mincuts \TtT \ccA$ has at most one element and the statement is straightforward.

Now let $\TtT$ be a tree. By induction there exists a tensor $t_3 \in \TNS(\TtT_3,r)$ with flattening rank $r^{|\ccM_3|}$ with respect to $\ccA_3$   as $\ccM_3$ is a minimal monochromatic cut for $\TtT_3$ and subset of its leaves $\ccA_3$. Also, there exists a tensor in $t_{12}\in\TNS(\TtT_1 \cup_{e_1-e_2} \TtT_2, r)$ flattening rank $r$ with respect to the  set $\ccA_1\cup \ccA_2$.

Now, by Lemma~\ref{rank_in_big_tree_from_two_disjoint_subtrees}, there exists a tensor $t$ in $\TNS(\TtT,r)$, namely $t_{12}\otimes t_3$, such that 
$\dim(\bigotimes_{ l \in \ccA} V_l ^* \hook t)=r^{|\ccM|}$, since $|\ccM|=|\ccM_3|+1$. As the flattening rank is semicontinuous, a generic tensor in $\TNS(\TtT,r)$ has flattening rank at least $r^{|\ccM|}$.
 \end{proof}

 \begin{rmk}
   To compute the rank of a general tensor in $\TNS(\TtT,r)$ with the constant function equal to $r$, we can equally well compute it for $r=2$. This is because the  exponent is independent of $r$.
 \end{rmk}
\section{The train track and almost binary models compared.}
\begin{defin}
 We say that a binary tree is an \textbf{almost perfect binary tree} if it differs from a perfect binary tree only by removing the last leaves from the last row.
\end{defin}
Let $\trainTrack n$ denote a train track tree with $n$ leaves. Let
$\almostBinary n$ denote an almost binary tree with $n$ leaves. 

In~\cite{nisiabu_jabu_mateusz_hackbusch} we proved a simple version of the Hackbusch conjecture, namely  
we compared a $\TNS (\trainTrack {2^q}, r_1)$ of a train track tree and $\TNS (\almostBinary {2^q}, r_2) $ of a perfect binary tree with $2^q$ leaves. In this paper we extend this result a bit by allowing arbitrary number of leaves for both tree types.

In order to compare the tensor network spaces coming from a train track tree and an almost binary tree, both with a natural permutation of leaves (from left to right), we will draw the almost binary tree in a specific way. Namely,

\begin{equation}\label{equ-binary-tree-hanged-from-the-outer-path}
\begin{tikzpicture}[baseline=.4cm,scale=0.5]
  \draw[ ciemnyblekit, thick] (0,3) -- (12.5,3); 
  \foreach \i in {1,2.5,4.75} {
  \draw[ ciemnyblekit, thick] (12.5-\i,3) -- (12.5-\i,2);
  \draw[ ciemnyblekit, thick] (\i      ,3) -- (\i ,2);
}
  \foreach \i in {2.5} {
  \draw[ ciemnyblekit, thick] (12.5-\i,2) -- (12.5-\i-0.5,1);
  \draw[ ciemnyblekit, thick] (12.5-\i,2) -- (12.5-\i+0.5,1);
  \draw[ ciemnyblekit, thick] (\i     ,2) -- (\i - 0.5,1);
  \draw[ ciemnyblekit, thick] (\i     ,2) -- (\i + 0.5,1);
}
  \foreach \i in {4.75} {
  \draw[ ciemnyblekit, thick] (12.5-\i,2) -- (12.5-\i-0.75,1);
  \draw[ ciemnyblekit, thick] (12.5-\i,2) -- (12.5-\i+0.75,1);
  \draw[ ciemnyblekit, thick] (\i     ,2) -- (\i - 0.75,1);
  \draw[ ciemnyblekit, thick] (\i     ,2) -- (\i + 0.75,1);
}
  \foreach \i in {4.75} {
  \draw[ ciemnyblekit, thick]  (\i-0.75,1) --  (\i-0.75-0.5,0);
  \draw[ ciemnyblekit, thick]  (\i-0.75,1) --  (\i-0.75+0.5,0);
  \draw[ ciemnyblekit, thick]  (\i+0.75,1) --  (\i+0.75-0.5,0);
  \draw[ ciemnyblekit, thick]  (\i+0.75,1) --  (\i+0.75+0.5,0);
  \draw[ ciemnyblekit, thick]  (12.5-\i-0.75,1) --  (12.5-\i-0.75-0.5,0);
  \draw[ ciemnyblekit, thick]  (12.5-\i-0.75,1) --  (12.5-\i-0.75+0.5,0);
  \draw[ ciemnyblekit, thick]  (12.5-\i+0.75,1) --  (12.5-\i+0.75-0.5,0);
  \draw[ ciemnyblekit, thick]  (12.5-\i+0.75,1) --  (12.5-\i+0.75+0.5,0);
}
\end{tikzpicture}  
\qquad
\begin{tikzpicture}[baseline=.4cm,scale=0.5]
  \draw[ ciemnyblekit, thick] (-1.5,3) -- (12.5,3); 
  \draw[ ciemnyblekit, thick] (-0.5,3) -- (-0.5,2); 

  \foreach \i in {1,2.5,4.75} {
  \draw[ ciemnyblekit, thick] (12.5-\i,3) -- (12.5-\i,2);
  \draw[ ciemnyblekit, thick] (\i      ,3) -- (\i ,2);
}
  \foreach \i in {1} {
  \draw[ ciemnyblekit, thick] (\i     ,2) -- (\i - 0.5,1);
  \draw[ ciemnyblekit, thick] (\i     ,2) -- (\i + 0.5,1);
}
  \foreach \i in {2.5} {
  \draw[ ciemnyblekit, thick] (12.5-\i,2) -- (12.5-\i-0.5,1);
  \draw[ ciemnyblekit, thick] (12.5-\i,2) -- (12.5-\i+0.5,1);
  \draw[ ciemnyblekit, thick] (\i     ,2) -- (\i - 0.5,1);
  \draw[ ciemnyblekit, thick] (\i     ,2) -- (\i + 0.5,1);
}
  \foreach \i in {4.75} {
  \draw[ ciemnyblekit, thick] (12.5-\i,2) -- (12.5-\i-0.75,1);
  \draw[ ciemnyblekit, thick] (12.5-\i,2) -- (12.5-\i+0.75,1);
  \draw[ ciemnyblekit, thick] (\i     ,2) -- (\i - 0.75,1);
  \draw[ ciemnyblekit, thick] (\i     ,2) -- (\i + 0.75,1);
}
  \foreach \i in {2} {
  \draw[ ciemnyblekit, thick]  (\i,1) --  (\i-0.35,0);
  \draw[ ciemnyblekit, thick]  (\i,1) --  (\i+0.35,0);
  \draw[ ciemnyblekit, thick]  (\i+1,1) --  (\i+1-0.35,0);
  \draw[ ciemnyblekit, thick]  (\i+1,1) --  (\i+1+0.35,0);
}
  \foreach \i in {4.75} {
  \draw[ ciemnyblekit, thick]  (\i-0.75,1) --  (\i-0.75-0.5,0);
  \draw[ ciemnyblekit, thick]  (\i-0.75,1) --  (\i-0.75+0.5,0);
  \draw[ ciemnyblekit, thick]  (\i+0.75,1) --  (\i+0.75-0.5,0);
  \draw[ ciemnyblekit, thick]  (\i+0.75,1) --  (\i+0.75+0.5,0);
  \draw[ ciemnyblekit, thick]  (12.5-\i-0.75,1) --  (12.5-\i-0.75-0.5,0);
  \draw[ ciemnyblekit, thick]  (12.5-\i-0.75,1) --  (12.5-\i-0.75+0.5,0);
  \draw[ ciemnyblekit, thick]  (12.5-\i+0.75,1) --  (12.5-\i+0.75-0.5,0);
  \draw[ ciemnyblekit, thick]  (12.5-\i+0.75,1) --  (12.5-\i+0.75+0.5,0);
}
  \draw[ ciemnyblekit, thick]  (3.5,0) --  (3.3,-1);
  \draw[ ciemnyblekit, thick]  (3.5,0) --  (3.7,-1);
\end{tikzpicture}  
\end{equation}

On the above picture both trees are almost perfect binary trees, one is perfect with $16$ leaves, the other has nine new leaves in the new row and a total of $21$ leaves.
For a binary tree drawn as on the Figure~(\ref{equ-binary-tree-hanged-from-the-outer-path}), each  subtree below a vertical edge is a \emph{hanging subtree}.

\begin{rmk}
Let $a_k = \sum_{i=0}^{i=k}4^i$ for $k>0$, and set $a_0 = 0$. This number can be interpreted combinatorially 
as the biggest number of leaves that the almost binary tree $\TtT$ has, if there exists a subset $\ccA$ of its leaves and 
$\minmono(\TtT,\ccA)\leqslant k$. 
Other way of defining these numbers is to define a sequence of almost binary trees for each $a_k$. The first one is empty. The next has $a_1=5$ leaves. Having defined those trees up to $k$-th, the $k+1$ tree is the almost binary tree with the smallest number of leaves, such that it has the $k$th one as a hanging subtree.
\end{rmk}

\begin{lem}\label{lem-natural-permutation-ak-jumps}
Suppose $n$ is in the set  $\{a_{k-1}+1,\ldots, a_{k}\}$ and the leaves of the almost binary tree with $n$ leaves are labelled from left to right, when the tree is drawn as above.
Then there exists $j\in\{1,\ldots,n\}$ such that 
$\minmono |{\almostBinary n },{ [j]}| \geqslant k$.
\end{lem}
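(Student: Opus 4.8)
The plan is to convert the statement into a combinatorial claim about nested hanging subtrees of $\almostBinary{n}$ and prove that claim by induction on $n$, the recursion defining the numbers $a_k$ supplying the numerical input. First I would record a lower bound for $\minmono$ coming from a chain of hanging subtrees. Call a hanging subtree $S'$ of a tree $S$ \emph{interior} if $S$ has leaves both to the left and to the right of the block of leaves of $S'$ in the left-to-right order; in the drawing of $\almostBinary{n}$ from figure~(\ref{equ-binary-tree-hanged-from-the-outer-path}) every hanging subtree attached below a vertical edge on the left half of the outer path is interior. I claim: if $\almostBinary{n}=T_0\supsetneq T_1\supsetneq\dots\supsetneq T_m$ is a chain in which each $T_{i+1}$ is an interior hanging subtree of $T_i$, and $[j]$ is chosen so that $j$ lies in the block of leaves of $T_m$, then $\minmono|\almostBinary{n},[j]|\ge m+1$. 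Indeed the block of each $T_{i+1}$ is then a proper sub-block of the block of $T_i$ containing $j$, so the leaves of $T_i$ outside $T_{i+1}$ fall into an all-black piece to the left of $T_{i+1}$ and an all-white piece to the right, each nonempty by interiority. Hence for any monochromatic cut $\ccM$ and each $i<m$ the set $\ccM$ must contain an edge of $\ccE(T_i)\setminus\ccE(T_{i+1})$ — otherwise the connected subgraph $T_i\setminus T_{i+1}$, which carries both colours, would lie inside a single component of the cut — and together with an edge of $\ccM$ interior to $T_m$ (one exists because $j$ lies in the block of $T_m$ and $|T_m|\ge 2$, so $T_m$ is bichromatic) these are $m+1$ distinct edges. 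Equivalently, by Proposition~\ref{size-of-min-and-max-cuts} this amounts to exhibiting a colour cut of size $m$.

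It therefore suffices to prove: for every $n\ge 2$, if $k$ is determined by $a_{k-1}<n\le a_k$ then $\almostBinary{n}$ admits a chain $\almostBinary{n}=T_0\supsetneq\dots\supsetneq T_{k-1}$ of interior hanging subtrees with $|T_{k-1}|\ge 2$; taking $j$ at the leftmost leaf of $T_{k-1}$ then gives $\minmono|\almostBinary{n},[j]|\ge k$. I would induct on $n$. If $k=1$ the chain $T_0=\almostBinary{n}$ works, as $n\ge 2$. If $k\ge 2$, let $S$ be the largest hanging subtree of $\almostBinary{n}$ attached to the left half of the outer path; such a subtree is again an almost perfect binary tree, because the deeper leaves of $\almostBinary{n}$ occupy an initial segment of the left-to-right order, so a hanging subtree meets them in an initial segment of its own leaves. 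Thus $S=\almostBinary{|S|}$ with $2\le|S|<n$, and granting the structural fact below one has $|S|>a_{k-2}$, so the index of $|S|$ is at least $k-1$. By the inductive hypothesis $S$ contains a chain of interior hanging subtrees of length at least $k-2$ whose last term has at least two leaves; truncating it to length $k-2$ (which only enlarges the last term) and prepending the interior extension $\almostBinary{n}\supsetneq S$ produces the required chain of length $k-1$.

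Everything now reduces to the following structural fact, which I expect to be the main obstacle: the largest hanging subtree of $\almostBinary{n}$ attached to the left half of the outer path has more than $a_{k-2}$ leaves whenever $a_{k-1}<n$ (for $k\ge 2$). This is exactly where the recursion defining $a_k$ is used. By that recursion $\almostBinary{a_{k-1}}$ is the smallest almost perfect binary tree having $\almostBinary{a_{k-2}}$ as a hanging subtree, so the largest such hanging subtree of $\almostBinary{a_{k-1}}$ is a copy of $\almostBinary{a_{k-2}}$ sitting on the left half of the outer path, with exactly $a_{k-2}$ leaves; following the completion order, the next leaf to be split — turning $\almostBinary{a_{k-1}}$ into $\almostBinary{a_{k-1}+1}$ — falls inside that copy, enlarging it to $a_{k-2}+1$ leaves, and the size of the largest hanging subtree never decreases as leaves are added, so it stays $>a_{k-2}$ for all $n>a_{k-1}$. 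Making this precise — for each $n$ in the window $(a_{k-1},a_k]$, pinpointing which leaf of $\almostBinary{n}$ is split next and in which hanging subtree it lands, and matching the running sizes against the closed form $a_k=\sum_{i=0}^{k}4^i$ — is the only genuinely computational part of the argument; the reduction above and the remaining induction are formal.
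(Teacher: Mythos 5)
Your proof is correct and follows essentially the same route as the paper: an induction on $n$ that locates a large hanging subtree, charges one extra cut for separating it from the bichromatic remainder (your chain of interior hanging subtrees is just the unrolled form of this step, and your lower-bound lemma is the colour-cut direction of Proposition~\ref{size-of-min-and-max-cuts}), with the recursion behind the numbers $a_k$ entering through the size of the largest left-half hanging subtree, exactly as in the paper's discussion of the subtree $\TtT_{\xi}$. The structural fact you isolate is true and your sketch of it is sound at the same level of detail as the paper's own argument; the only point to add is that for $k=2$ the chosen subtree must have at least two leaves (not merely more than $a_0=0$), which indeed holds for all $n\ge 6$.
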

\begin{prf}
  We argue by induction on the size of the binary tree.
  First we check case by case   $n \in \{1,\ldots,6\} $. 
Up to $n=a_1=5$ leaves of the subsets of leaves of type $[j]$ for some $j \in \{1, \ldots, n\}$ are also of type $\overset{\leftarrow} e$, so there is nothing to prove and the number of cuts  is $1$. When we get to $n=6=a_1+1$, then we need one more cut --- for the subset  $\{1,2,3\}$ of the leaves. 
Now we want to prove the claim for the pair of trees with $n$ leaves. Suppose we proved our claim for all $m<n$. 
Let us observe that the induced permutations on the hanging subtrees are natural. 
We distinguish two cases. 

The first case is when all hanging subtrees of our binary tree with $n$ leaves require at most $k-1$ cuts. Then  for each subset of leaves coming from the train track model, that is for the sets of type $\{1,\ldots,j\}$, the whole tree requires at most $k$ cuts. Indeed, any minimal monochromatic cut induces a minimal monochromatic cut for the hanging subtree, which has at least $k-1$ elements.
One more cut is needed in order to separate 
the hanging tree from the leaves with indices 
  that are either greater or less than $j$.

The second case is when at least one of the \emph{hanging} trees needs $k$ cuts.
We know that for $m<n$ the increase in the number of cuts needed occurs at each $m=a_l+1$ for some $l\in \NN$. 

The smallest $n$ in question for which this happens is $n=a_k+1$. Then all the hanging trees are perfect binary trees except one called $\TtT_{\xi}$, which has $a_{k-1}+1$ leaves and  hangs from a vertical edge $\xi$ --- keep in mind our tree is almost perfect binary tree. If we look at the edge $\xi$, we see that it has two horizontal incident edges one to the left and one to the  right, call them $\xi_l$ and $\xi_r$ respectively. By construction, the tree with the root equal to the left (respectively right) vertex of $\xi_l$ (respectively $\xi_r$) is perfect binary with $a_{k-1}<4^k<a_k$ leaves. Thus, both also need $k$ cuts.

For all bigger trees, that is for $n > a_k+1$, by the induction assumption there exists $j$ such that to cut out the set $\{1,\ldots, j\} \cap \ccL(\TtT_{\xi}) = \{ j',\ldots,j\}$ or its complement in $\TtT_{\xi}$, we need at least $k$ cuts inside the tree $\TtT_{\xi}$. As $\xi$ is neither first or last vertical edge, one more cut outside the tree $\TtT_{\xi}$ is needed.

\end{prf}

\begin{lem}
Suppose $n$ is in the set  $\{a_{k-1}+1,\ldots, a_{k}\}$ and the leaves of the almost binary tree with $n$ leaves are labelled by any permutation.
Then there exists $j\in\{1,\ldots,n\}$ such that 
$\minmono |{\almostBinary n },{ [j]}| \geqslant k$.
In other words, natural permutation always gives the smallest minimal monochromatic cut for any subset from the definition of $\trainTrack n$.
\end{lem}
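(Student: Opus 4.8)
The plan is to reduce the statement for an arbitrary permutation to the already-proved case of the natural permutation (Lemma~\ref{lem-natural-permutation-ak-jumps}). The key observation is that the sets $[j]$ arising in the $\trainTrack n$ model are exactly the sets $\overleftarrow{e}$ of leaves to one side of an edge $e$ in the train track tree; so what we really want to show is that, among all $n-1$ nontrivial ``prefix'' subsets coming from the train track, at least one of them requires $\geqslant k$ monochromatic cuts in $\almostBinary n$. Since any subset $\ccA$ and its complement have the same minimal monochromatic cut, and since permuting the leaves of $\almostBinary n$ by $\sigma$ sends a subset $\ccA$ to $\sigma(\ccA)$ without changing $\minmono$, it suffices to exhibit, for the \emph{natural} labelling, a prefix $[j]$ with $\minmono|\almostBinary n, [j]| \geqslant k$, and then transport it: the point is that the collection $\{\sigma([j]) : j \in [n]\}$ is the set of prefixes for \emph{some} order of leaves, so the ``worst'' subset found in the natural case corresponds, under $\sigma$, to a subset that is still a train-track prefix for the relabelled tree.

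Concretely, I would argue as follows. Fix any permutation $\sigma$ of the leaves of $\almostBinary n$; this amounts to relabelling the leaves, and the statement ``there exists $j$ with $\minmono|\almostBinary n, [j]| \geqslant k$'' is a statement about the \emph{combinatorial} tree $\almostBinary n$ together with the nested chain of subsets $\emptyset \subsetneq \{\sigma^{-1}(1)\} \subsetneq \{\sigma^{-1}(1),\sigma^{-1}(2)\} \subsetneq \dots \subsetneq \ccL$. Thus I only need: for \emph{every} maximal chain $\emptyset = S_0 \subsetneq S_1 \subsetneq \dots \subsetneq S_n = \ccL$ of subsets of leaves with $|S_i| = i$, there is some $i$ with $\minmono|\almostBinary n, S_i| \geqslant k$. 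Lemma~\ref{lem-natural-permutation-ak-jumps} is precisely the instance of this for the chain given by the natural left-to-right order. To promote it to all chains, I would use the monotonicity property: if $S \subseteq S'$ differ by one leaf $\ell$, then $|\,\minmono|\TtT,S| - \minmono|\TtT,S'|\,| \leqslant $ (the number of edges on the path from $\ell$ to the nearest monochromatic boundary) — more simply, $\minmono$ changes by a bounded amount when one leaf flips colour, and in particular along \emph{any} maximal chain $\minmono$ must climb from $0$ up to its maximum over all subsets and back down. Since Lemma~\ref{lem-natural-permutation-ak-jumps} already shows that maximum is $\geqslant k$ when $n \geqslant a_{k-1}+1$ (the natural chain attains a value $\geqslant k$), every maximal chain also attains a value $\geqslant k$ at some step, because it is a maximal chain through the Boolean lattice and must pass ``near'' a subset of large monochromatic-cut-number.

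The cleaner way to make the last step rigorous, and the one I would actually write, is: let $\ccB$ be a subset of leaves of $\almostBinary n$ realizing the maximum of $\minmono|\almostBinary n, \cdot|$ over all subsets; by Lemma~\ref{lem-natural-permutation-ak-jumps} (applied with the natural order, which witnesses a prefix of monochromatic-cut-number $\geqslant k$) this maximum is $\geqslant k$. Now given an arbitrary maximal chain $S_0 \subsetneq \dots \subsetneq S_n$, I walk along it and track $d_i := \minmono|\almostBinary n, S_i|$; since $S_0 = \emptyset$ has $d_0 = 0$ and some $S_{i_0}$ satisfies $S_{i_0} \supseteq \ccB \cap (\text{initial segment})$... — here is the subtlety — I need that \emph{some} $S_i$ equals, or is ``close enough'' to, a maximizer $\ccB$. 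This is \emph{not} automatic: an arbitrary maximal chain need not pass through $\ccB$. So the genuine content, and the main obstacle, is a \emph{stability} statement: flipping one leaf changes $\minmono$ by at most $1$. Granting that, consider the chain and the indicator of the event ``$S_i$ and $\ccB$ agree on at least $i$ leaves''; a standard discrete intermediate-value / exchange argument shows that at the step where $|S_i \triangle \ccB|$ is minimized the value $d_i$ is within $|S_i \triangle \ccB|/2$-ish of $\minmono|\almostBinary n,\ccB| \geqslant k$ — but this only gives $\geqslant k$ if we can route the chain through $\ccB$.

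Therefore the honest, minimal-obstacle proof abandons the ``any maximizer'' route and instead mimics the induction in Lemma~\ref{lem-natural-permutation-ak-jumps} directly for an arbitrary permutation. I would induct on $n$ exactly as there: split $\almostBinary n$ into its hanging subtrees; the induced labelling (now by an arbitrary permutation restricted to each hanging subtree) is still an arbitrary permutation of that subtree's leaves, so the inductive hypothesis applies to each; if some hanging subtree $\TtT'$ with $m'$ leaves has $m' \geqslant a_{k-1}+1$, induction gives a prefix-for-the-restricted-order $\{j', \dots, j\} = S \cap \ccL(\TtT')$ needing $\geqslant k$ cuts inside $\TtT'$; choosing $j$ appropriately makes $[j]$ (in the global order) restrict to exactly that set, and as $\TtT'$ is a proper hanging subtree at least one further cut separates it, giving $\geqslant k$ globally — wait, that double-counts. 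The correct bookkeeping: the restricted ``prefix'' needing $\geqslant k$ cuts already gives $\minmono|\almostBinary n,[j]| \geqslant k$ since any monochromatic cut of $\almostBinary n$ induces one of $\TtT'$. If instead every hanging subtree needs at most $k-1$ cuts, then since $n \geqslant a_{k-1}+1$ a counting argument (the hanging subtrees partition the $n$ leaves, and the threshold $a_{k-1}$ is exactly the max leaf count for ``$k-1$ suffices everywhere on a single almost-binary tree'') forces the spine itself to separate two hanging subtrees of opposite-ish colour for some prefix $[j]$, costing one extra cut and reaching $k$. The main obstacle, then, is purely combinatorial: verifying that the inductive decomposition and the base cases $n \le a_1+1$ go through \emph{uniformly} in the permutation, i.e.\ that ``restriction of an arbitrary permutation to a hanging subtree is an arbitrary permutation of that subtree'' and that the prefix structure descends correctly; I expect the base case and this descent check to be where all the care is needed, and everything else to follow the template of Lemma~\ref{lem-natural-permutation-ak-jumps} essentially verbatim.
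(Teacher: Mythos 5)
You correctly abandon the relabelling/stability shortcut (and your reason is the right one: a maximal chain of subsets need not pass near a maximizer of $\minmono$), and your fallback — induct on $n$, restrict the global order to each subtree, observe that the restriction of an arbitrary order is again an arbitrary order so the inductive hypothesis applies, and note that a monochromatic cut of the whole tree restricts to one of the subtree — is the same skeleton the paper uses. The easy half of your case analysis (some subtree already forces $\geqslant k$ cuts internally for one of its induced prefixes, hence for the corresponding global prefix $[j]$) is correct and matches the paper.

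The gap is exactly where you flag uncertainty: the case in which no single subtree suffices on its own. Your ``counting argument \dots forces the spine itself to separate two hanging subtrees of opposite-ish colour for some prefix $[j]$, costing one extra cut'' is not a proof, and the extra cut is genuinely not automatic for an arbitrary permutation. If the $j$ at which a hanging subtree $\TtT'$ first needs $k-1$ internal cuts happens to satisfy $[j]\subseteq\ccL(\TtT')$, then every leaf outside $\TtT'$ has the same colour and can be absorbed into the component of $\TtT'$ containing its attachment vertex, so $\minmono$ of the whole tree can equal $k-1$ with no spine cut at all; the left-to-right filling that makes this step free in the natural-order lemma is precisely what an arbitrary permutation destroys. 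The paper closes this by working at a trivalent vertex whose three subtrees $\TtT_{\xi_1},\TtT_{\xi_2},\TtT_{\xi_3}$ each (by induction) have some induced prefix requiring $k$ cuts, and then running a sweep: increase $j$ and record the first index at which each $\TtT_{\xi_i}$ saturates; at the first such event the other two subtrees still have leaves outside $[j]$, and either one of them already has a leaf inside $[j]$ (done), or one continues to the second saturation event, at which point the first subtree has leaves inside $[j]$ and the third has leaves outside, forcing one cut external to the saturated subtree. This coordination of a single $j$ across several subtrees — together with the threshold claim that three such subtrees can only coexist once $n\geqslant a_k+1$ — is the content your proposal is missing.
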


\begin{prf}
 Again we proceed by induction on the number of leaves.
As the first induction step, for the number of leaves from $1$ to $6$ we check case by case that switching from natural   permutation to any other permutation, the number of cuts can only increase.

For the induction step we will construct $\ccM \in \MinCuts(\TtT,\ccA)$.
We consider two situations. The first case is when at any vertex, at most two of the three trees that have a root at this vertex require $k$ cuts, the other(s) at most $k-1$. Then, as in the proof of Lemma~\ref{lem-natural-permutation-ak-jumps}, the number of cuts required for the whole tree is at least $k$.

In the second case there exists a vertex, such that all three subtrees require $k$ cuts. The smallest $n$ for which this situation occurs for all permutations, is  $n=a_{k}+1$. To see this, use induction combined with~Lemma~\ref{lem-natural-permutation-ak-jumps}.
\[
\begin{tikzpicture}[baseline=.4cm,scale=0.4]
  \draw[ ciemnyblekit, thick] (0,0) -- (2,0); 
  \draw[ ciemnyblekit, thick] (2,0) -- (4,1); 
  \draw[ ciemnyblekit, thick] (2,0) -- (4,-1); 
  \draw[ ciemnyblekit, thick] (4,1) -- (4,-1); 
  \draw (4,0) node[anchor=west]{$\TtT_{\xi_1}$};
  \draw[ ciemnyblekit, thick] (0,0) -- (-1.5,1.3); 
  \draw[ ciemnyblekit, thick] (0,0) -- (-1.5,-1.3); 
  \draw[ ciemnyblekit, thick] (-1.5,1.3) -- (-2,3); 
  \draw[ ciemnyblekit, thick] (-1.5,1.3) -- (-3.3,1.8); 
  \draw[ ciemnyblekit, thick] (-2  ,3  ) -- (-3.3,1.8); 
  \draw (-2.6,2.4) node[anchor=south east]{$\TtT_{\xi_2}$};
  \draw[ ciemnyblekit, thick] (-1.5,-1.3) -- (-2,-3); 
  \draw[ ciemnyblekit, thick] (-1.5,-1.3) -- (-3.3,-1.8); 
  \draw[ ciemnyblekit, thick] (-2  ,-3  ) -- (-3.3,-1.8); 
  \draw (-2.6,-2.4) node[anchor=north east]{$\TtT_{\xi_3}$};
\end{tikzpicture}
\]

We call those trees $\TtT_{\xi_1}$, $\TtT_{\xi_2}$, $\TtT_{\xi_3}$.
Since we work with almost perfect binary tree, this is true for all bigger $n$ as well.
We claim at least $k+1$ cuts are needed for the whole tree for a set
$[j]$ for some $j$.

We increase $j$ until we need $k$ cuts inside one of the  trees $\TtT_{\xi_i}$ for $i_1\in \{1,\ldots,3\}$ for the first time. This guarantees the other two have some leaves outside  the set $\{1,\ldots,j \}$. If at least one has a leaf in this set, we are done. If not, we continue increasing $j$ until a second of the trees, say 
$\TtT_{\xi_{i_2}}$ needs $k$ cuts. In this situation $\TtT_{\xi_{i_1}}$ has some leaves in  $\{1,\ldots,j \}$ 
and $\TtT_{\xi_{i_3}}$ has some leaves outside of it. This implies we need $k$ cuts inside $\TtT_{\xi_{i_2}}$ 
and at least one more outside of it, which concludes the proof.
\end{prf}
\begin{thm}[Hackbush conjecture]
  Let $n \in \{a_{k-1}+1,\ldots,a_k\}$. Then
\[
HH(n,r) \subset TT(n,r^k) 
\]
when both underlying trees have the same order of leaves.
On the other hand for any permutation of leaves 
\[
HH(n,r) \nsubseteq TT(n,r^k-1).
\]
\end{thm}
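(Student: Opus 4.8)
The plan is to run both statements through flattening ranks, using the edge description of a tensor network space in Definition~\ref{definintion-edge-def-of-TNS} to pass between membership in a $\TNS$ and bounds on the flattening dimensions at the edges of the relevant tree. Write $HH(n,r)=\TNS(\almostBinary n,r)$ and $TT(n,m)=\TNS(\trainTrack n,m)$, and recall that when both trees carry the common left-to-right order of the leaves, the edges of $\trainTrack n$ are exactly the ones whose left-hand leaf set is an initial segment $[j]$ with $1\le j\le n-1$.

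For the inclusion $HH(n,r)\subseteq TT(n,r^k)$, fix $t\in HH(n,r)$ and an edge $\varepsilon$ of $\trainTrack n$, so $\overleftarrow\varepsilon=[j]$ for some $j$. I would apply Theorem~\ref{upper_bound_any_function} to the tree $\almostBinary n$, the constant function $r$, the subset $[j]$, and a \emph{minimal} monochromatic cut $\ccM$ of $\almostBinary n$ with respect to $[j]$, which gives
\[
\dim\left(\left(\bigotimes_{l\in[j]}V_l\right)^{*}\hook t\right)\le\prod_{e\in\ccM}r=r^{\minmono|{\almostBinary n},{[j]}|}.
\]
Hence it suffices to show that $\minmono|{\almostBinary n},{[j]}|\le k$ for every $j$ as soon as $n\le a_k$: then the displayed bound is $\le r^k$ at every edge of $\trainTrack n$, and Definition~\ref{definintion-edge-def-of-TNS} yields $t\in\TNS(\trainTrack n,r^k)=TT(n,r^k)$.

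The inequality $\minmono|{\almostBinary n},{[j]}|\le k$ for all $j$ (natural order, $n\le a_k$) is the counterpart of Lemma~\ref{lem-natural-permutation-ak-jumps} and is essentially the combinatorial content of the description of $a_k$ in the Remark preceding it; a self-contained proof runs by the same induction on $n$, the cases $n\le5$ being immediate because each $[j]$ is then already of the form $\overleftarrow e$. For the step, draw $\almostBinary n$ as in Figure~(\ref{equ-binary-tree-hanged-from-the-outer-path}): its hanging subtrees are almost binary trees carrying the natural order, and by the construction defining $a_k$ (cf. the proof of Lemma~\ref{lem-natural-permutation-ak-jumps}) each of them has at most $a_{k-1}$ leaves. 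The leaf blocks of the hanging subtrees occur consecutively along the outer path, so an initial segment $[j]$ of $\almostBinary n$ straddles at most one hanging subtree and meets it in one of its own initial segments; by the inductive hypothesis that subtree has a monochromatic cut of size $\le k-1$, and one further edge of the outer path, on the appropriate side of the straddled subtree, completes it to a monochromatic cut of $\almostBinary n$ of size $\le k$ --- exactly as in the first case of the proof of Lemma~\ref{lem-natural-permutation-ak-jumps}.

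For the non-inclusion, fix any permutation $\pi$ of the leaves. The lemma following Lemma~\ref{lem-natural-permutation-ak-jumps} (the same statement for an arbitrary permutation, which applies since $n\ge a_{k-1}+1$) gives an initial segment $\ccA=[j]_\pi$ of $\trainTrack n$ with $\minmono|{\almostBinary n},{\ccA}|\ge k$. By Theorem~\ref{rank_of_a_tensor_and_set_A} a generic $t_0\in HH(n,r)$ has flattening rank $r^{\minmono|{\almostBinary n},{\ccA}|}\ge r^k$ with respect to $\ccA$, while Definition~\ref{definintion-edge-def-of-TNS} forces every $t\in TT(n,r^k-1)$ to have flattening rank $\le r^k-1<r^k$ with respect to the same $\ccA$. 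The locus in the irreducible variety $HH(n,r)$ where the $\ccA$-flattening rank is $\le r^k-1$ is therefore a proper closed subset, and it contains $HH(n,r)\cap TT(n,r^k-1)$; hence $HH(n,r)\not\subseteq TT(n,r^k-1)$. The step I expect to cost the most effort is the ``all $[j]$'' upper bound of the third paragraph: it rests on pinning down the hanging-subtree decomposition of an almost perfect binary tree --- the sizes of the consecutive leaf blocks, the fact that the single deeper hanging subtree still has at most $a_{k-1}$ leaves, and that a given $[j]$ straddles only one block --- which is precisely the bookkeeping already developed around Lemma~\ref{lem-natural-permutation-ak-jumps}; everything else is a direct combination of Theorems~\ref{upper_bound_any_function} and~\ref{rank_of_a_tensor_and_set_A} with the edge definition of $\TNS$ and semicontinuity of flattening rank.
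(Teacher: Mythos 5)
Your proposal is correct and follows essentially the route the paper intends: Theorem~\ref{upper_bound_any_function} plus the combinatorial bound $\minmono|\almostBinary n,[j]|\le k$ for the inclusion, and the arbitrary-permutation lemma plus Theorem~\ref{rank_of_a_tensor_and_set_A} for the non-inclusion. The paper in fact leaves this theorem without an explicit proof, and your third paragraph supplies the one ingredient it only gestures at (the upper-bound counterpart of Lemma~\ref{lem-natural-permutation-ak-jumps}, implicit in the remark defining $a_k$), with a correct induction on the hanging subtrees.
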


\section{Models with non-constant function}
Suppose now we have two trees $\TtT_1$ and $\TtT_2$. Let us fix a function on edges of the first tree. Then, using our methods, we can give bounds for the function on the edges of the second tree so that there is an inclusion of the $\TNS$ models.
\begin{thm}
Let $\TNS(\TtT_1, f)$ and  $\TNS(\TtT_2, g)$ be two tensor network spaces with the same number of leaves and the same vector spaces associated to them. If
\[
 \TNS(\TtT_1, f) \subset \TNS(\TtT_2, g)
\]
then for any edge $\varepsilon \in \ccE(\TtT_2)$
\[
g(\varepsilon) \geqslant \prod_{e\in \ccM} f(e)
\]
where $\ccM \in \mincuts {\TtT_2} {\overset{\leftarrow}{\varepsilon}} $.
\end{thm}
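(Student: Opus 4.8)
The plan is to reduce the inclusion to the existence of one tensor with prescribed flattening rank, and then produce that tensor by the recursion already used for Theorem~\ref{rank_of_a_tensor_and_set_A}. First I would unwind what the inclusion says at the edge $\varepsilon$. Put $\ccA:=\overset{\leftarrow}{\varepsilon}$, the set of leaves lying to the left of $\varepsilon$; since $\TtT_1$ and $\TtT_2$ carry the same leaves and the same $V_i$, this is a subset of $\ccL(\TtT_1)$, and $\ccM$ is understood as a minimal monochromatic cut of $\TtT_1$ with respect to $\ccA$ (so that $\ccM\subset\ccE(\TtT_1)$ and $\prod_{e\in\ccM}f(e)$ is defined). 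By the edge description of $\TNS(\TtT_2,g)$ (Definition~\ref{definintion-edge-def-of-TNS}), every tensor in $\TNS(\TtT_2,g)$ has flattening rank at most $g(\varepsilon)$ with respect to $\ccA$, so the hypothesis $\TNS(\TtT_1,f)\subset\TNS(\TtT_2,g)$ forces the same bound on every tensor of $\TNS(\TtT_1,f)$. Hence it suffices to exhibit a single $t\in\TNS(\TtT_1,f)$ whose flattening rank with respect to $\ccA$ equals $\prod_{e\in\ccM}f(e)$ for some minimal monochromatic cut $\ccM$; feeding such a $t$ into the previous sentence yields $g(\varepsilon)\geqslant\prod_{e\in\ccM}f(e)$, which is the claim for that $\ccM$. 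Throughout I take $f$ to be the optimal function of $\TNS(\TtT_1,f)$, which is what makes the right-hand side the correct quantity and what guarantees that each edge bound $f(e)$ is attained at a generic tensor.

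To build $t$ I would run the construction from the proof of Theorem~\ref{rank_of_a_tensor_and_set_A} with a non-constant $f$, by induction on $|\ccV(\TtT_1)|$. If $\ccA$ or its complement is empty, then $\ccM=\emptyset$, the product is $1$, and any tensor works. Otherwise, as in the proof of Proposition~\ref{size-of-min-and-max-cuts}, I would pick a trivalent vertex $v$ with subtrees $\TtT'$ (all leaves in $\ccA$, attaching edge $e'$), $\TtT''$ (all leaves outside $\ccA$, attaching edge $e''$), and $\TtT'''$ (the rest, attaching edge $e'''$ chosen to be an initial edge of a maximal colour cut). Let $e^\star\in\{e',e''\}$ be one on which $f$ is no larger than on the other, so $f(e^\star)=\min(f(e'),f(e''))$, and form the induced subset $\ccA'''$ of $\ccL(\TtT''')$ by adjoining the new leaf $v$ exactly when $e^\star=e''$ (the recipe of Proposition~\ref{size-of-min-and-max-cuts}). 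By the inductive hypothesis there are a minimal monochromatic cut $\ccM'''$ of $\TtT'''$ with respect to $\ccA'''$ and a tensor $t'''\in\TNS(\TtT''',f|_{\ccE(\TtT''')})$ of flattening rank $\prod_{e\in\ccM'''}f(e)$ with respect to $\ccA'''$; on $\TtT'\cup_{e'-e''}\TtT''$, with the inherited optimal function, a generic tensor $t''$ has flattening rank $\min(f(e'),f(e''))=f(e^\star)$ with respect to $\ccL(\TtT')$, in the same role as the rank-$r$ tensor on $\TtT_1\cup_{e_1-e_2}\TtT_2$ used for constant $f$. Then $\ccM:=\{e^\star\}\sqcup\ccM'''$ is a minimal monochromatic cut of $\TtT_1$ with respect to $\ccA$ with $\prod_{e\in\ccM}f(e)=f(e^\star)\cdot\prod_{e\in\ccM'''}f(e)$, and Lemma~\ref{rank_in_big_tree_from_two_disjoint_subtrees} applied to the disjoint subtrees $\TtT'\cup_{e'-e''}\TtT''$ and $\TtT'''$ gives $t:=t''\otimes t'''\in\TNS(\TtT_1,f)$ of flattening rank $\prod_{e\in\ccM}f(e)$ with respect to $\ccA$, closing the induction.

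The hard part will be the gluing step: matching the two recursively built pieces along $v$ so that Lemma~\ref{rank_in_big_tree_from_two_disjoint_subtrees} actually returns the product of their flattening ranks. This needs the restriction of the optimal $f$ to each hanging subtree to remain optimal there, so that the value $\min(f(e'),f(e''))$ is genuinely attained at the glued edge; it needs the choice of the cheaper of $e'$ and $e''$, so that this value is exactly the factor $f(e^\star)$ that $\ccM$ contributes; and it needs the correct treatment of the auxiliary leaf $v$ of $\TtT'''$, whose vector space has dimension at most $f(e''')$. All of this is the bookkeeping already carried out for a constant function in the proof of Theorem~\ref{rank_of_a_tensor_and_set_A}, so the real content is to check that it survives verbatim with a non-constant optimal $f$. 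Everything else is routine: the identity $|\ccM|=|\ccM'''|+1$ and the fact that replacing $\ccA$ by its complement changes no flattening rank are already in Proposition~\ref{size-of-min-and-max-cuts} and the lemma on hooking in a set versus its complement, and the reduction in the first paragraph uses only Definition~\ref{definintion-edge-def-of-TNS}.
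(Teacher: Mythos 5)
Your proposal takes a genuinely different route from the paper's. The paper disposes of this theorem in one sentence, by invoking Theorem~\ref{upper_bound_any_function} once for each edge $\varepsilon\in\ccE(\TtT_2)$ with $\ccA=\overset{\leftarrow}{\varepsilon}$. Read literally, that theorem only says that every $t\in\TNS(\TtT_1,f)$ has flattening rank at most $\prod_{e\in\ccM}f(e)$ at $\ccA$, which proves the \emph{converse} implication (the inequalities on $g$ are sufficient for the inclusion). To get the direction actually stated one must exhibit a tensor of $\TNS(\TtT_1,f)$ whose flattening rank at $\ccA$ equals $\prod_{e\in\ccM}f(e)$; you correctly identify this (and correctly read $\ccM$ as a cut of $\TtT_1$, not $\TtT_2$), and you try to supply the missing achievability by rerunning the construction of Theorem~\ref{rank_of_a_tensor_and_set_A} with a non-constant optimal $f$. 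So your reduction in the first paragraph is the right one, and your route addresses the statement as written, which the paper's citation by itself does not.

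The construction, however, has a gap that is specific to non-constant $f$ and cannot be absorbed into ``bookkeeping.'' Different minimal monochromatic cuts can have different products: on the caterpillar with four leaves, $\ccA=\{1,3\}$, and an optimal $f$ with $f(e_1)=f(e_3)=2$, $f(e_2)=f(e_4)=3$, $f(e_0)=6$, the four minimal cuts have products $4,6,6,9$. Since Theorem~\ref{upper_bound_any_function} applies to \emph{every} monochromatic cut, no tensor can have flattening rank exceeding the minimum of $\prod_{e\in\ccM'}f(e)$ over all cuts $\ccM'$; hence the tensor you want exists only if your greedily built $\ccM$ realizes that minimum. You never prove this, and it is not automatic: choosing the cheaper of $e'$, $e''$ changes whether the new leaf $v$ lies in $\ccA'''$, which changes the admissible cuts in the recursive call, so local optimality does not obviously compose. (The same example shows the theorem is false if ``$\ccM\in\mincuts{\TtT_1}{\overset{\leftarrow}{\varepsilon}}$'' is read as ``for every minimal cut,'' and false without your silent replacement of $f$ by its optimal version --- both points should be flagged explicitly rather than assumed.) Finally, the step you defer --- that the restriction of an optimal $f$ to a hanging subtree remains optimal, so that the merged edge genuinely attains $\min(f(e'),f(e''))$, and that Lemma~\ref{rank_in_big_tree_from_two_disjoint_subtrees} (stated only for constant $r$) applies to the glued pieces --- is the actual content of the achievability claim; until it is carried out, the theorem in the stated direction remains unproven.
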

\begin{proof}
The statement follows from~Theorem~\ref{upper_bound_any_function} applied once for each edge of the tree $\TtT_2$, with the tree $\TtT_1$ and subset given by the edge.
\end{proof}

\section{Exponential growth of the rank}
\begin{thm}[Exponential growth of tensor rank]\label{exponential_growth_for_constant_function}
  Let $\TNS(\TtT, r)$ be a tensor network space on a binary tree with $n$ leaves and  a constant function. 
Then the rank of a generic tensor in $\TNS(\TtT, r)$ is at least
$r^{\lfloor \frac n 2 \rfloor}$. In particular, the growth of the rank is at least exponential.
\end{thm}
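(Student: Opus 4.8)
The plan is to reduce the statement to the tensor-rank (CP-rank) lower bound for matrices, namely that a generic $m\times m$ matrix has rank $m$, together with a flattening argument. Recall that for any tensor $t\in V_1\otimes\dots\otimes V_n$ and any partition of the leaves into $\ccA$ and its complement, the tensor rank of $t$ is at least the flattening rank of $t$ with respect to $\ccA$; this is the standard observation that a rank-one tensor stays rank-one under any flattening, so a decomposition of $t$ into $R$ rank-one tensors yields a decomposition of the flattening matrix into $R$ rank-one matrices. Hence it suffices to exhibit one subset $\ccA\subset\ccL(\TtT)$ of leaves such that the flattening rank of a generic tensor in $\TNS(\TtT,r)$ with respect to $\ccA$ is at least $r^{\lfloor n/2\rfloor}$. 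By Theorem~\ref{rank_of_a_tensor_and_set_A}, this flattening rank equals $r^{\minmono|\TtT,\ccA|}$, so the whole problem becomes combinatorial: find $\ccA$ with $\minmono|\TtT,\ccA|\geqslant\lfloor n/2\rfloor$.

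The key combinatorial step is therefore: \emph{for every binary tree $\TtT$ with $n$ leaves there is a two-colouring of the leaves whose minimal monochromatic cut has size at least $\lfloor n/2\rfloor$.} The natural candidate is a colouring that alternates as much as possible along the leaves — concretely, I would process the leaves in the planar left-to-right order and colour them alternately dark/white, or, more robustly, choose $\ccA$ to be a maximal independent-in-adjacency set so that no two leaves sharing a parent get the same colour. With such a colouring, consider the "cherries" of the tree (vertices both of whose children are leaves). A binary tree with $n$ leaves has at least one cherry, and after removing a cherry one gets a binary tree with $n-1$ leaves; iterating, one sees the tree has a set of at least $\lfloor n/2\rfloor$ pairwise leaf-disjoint cherries is too strong in general, so instead I would argue directly via Proposition~\ref{size-of-min-and-max-cuts}: $\minmono|\TtT,\ccA| = |\ccC|+1$ for a maximal colour cut $\ccC$, and build a large colour cut by induction, peeling off one bichromatic cherry (a cherry with one dark and one white leaf) at a time. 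Each such peel contributes one edge to a colour cut and removes exactly two leaves, giving $|\ccC|\geqslant\lfloor n/2\rfloor - 1$ hence $\minmono\geqslant\lfloor n/2\rfloor$, provided the colouring guarantees a bichromatic cherry persists throughout the peeling.

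Making that induction clean is the place I expect the main obstacle. The subtlety is that after contracting a monochromatic-looking subtree into a single leaf, one must keep the invariant "roughly half the remaining leaves of each colour are separated by cuts already chosen"; a crude alternating colouring along the planar order can fail to produce a bichromatic cherry if the tree is very unbalanced (e.g. a caterpillar where the alternation still leaves same-coloured cherries). I would handle this by choosing the colouring adaptively in the same induction: at each step locate a cherry, colour its two leaves with opposite colours, contract it to a new leaf, and recurse on the $(n-1)$-leaf tree — but since contracting to a single leaf loses one leaf rather than two, I would instead contract the cherry's parent edge and track that the two leaves are now "accounted for". The careful bookkeeping is that each inductive step fixes the colours of two leaves and adds one edge to the colour cut, terminating when at most one leaf is unfixed; this yields a colour cut of size $\geqslant \lfloor n/2\rfloor-1$ and, via Proposition~\ref{size-of-min-and-max-cuts} and Theorem~\ref{rank_of_a_tensor_and_set_A}, the desired bound $r^{\lfloor n/2\rfloor}$ on the flattening rank, hence on the tensor rank. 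The final sentence about exponential growth is then immediate since $r\geqslant 2$ for any nontrivial model.
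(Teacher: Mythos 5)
Your proposal is correct and follows essentially the same route as the paper: reduce tensor rank to a flattening rank, invoke Theorem~\ref{rank_of_a_tensor_and_set_A}, and build the subset $\ccA$ adaptively by peeling off cherries one at a time, colouring the two leaves of each cherry with opposite colours so that each peel contributes one edge to a colour cut, then pass to $\minmono$ via Proposition~\ref{size-of-min-and-max-cuts}. If anything, you are more explicit than the paper about the step ``CP-rank $\geq$ flattening rank'' and about the final count $|\ccC|\geqslant\lfloor n/2\rfloor-1$, hence $\minmono\geqslant\lfloor n/2\rfloor$.
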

\begin{proof}
We construct a subset $\ccA \subset \ccL$ of the leaves such than $\maxcolor(\TtT, \ccA) \geqslant \lfloor \frac n 2 \rfloor$. Initially $\ccA = \emptyset$.

We say that an inner (edge or) vertex  of the tree is \emph{initial}, if it is (adjacent to) a leaf  in a tree obtained from $\TtT$ by removing all leaves and then removing vertices that have exactly two adjacent edges. Every initial vertex has two  sons, which are leaves. We pick an initial edge (there will be always at least one) and we say that one son of the corresponding initial vertex is in $\ccA$ and the other is not in $\ccA$.  At each step we cut an edge removing two leaves from the initial tree. 

Now it is enough to use~Theorem~\ref{rank_of_a_tensor_and_set_A} with the constructed set.

\end{proof}


 \bibliography{arxiv-hackbusch-ext.bbl} 

\bibliographystyle{plain}
\end{document}